\documentclass{amsart}
\usepackage{amsmath}
\usepackage{amsfonts}

\setcounter{MaxMatrixCols}{10}

\newtheorem{theorem}{Theorem}
\theoremstyle{plain}

\newtheorem{definition}{Definition}

\newtheorem{lemma}{Lemma}

\newtheorem{remark}{Remark}

\numberwithin{equation}{section}
\input{tcilatex}

\begin{document}
\title[ On Hermite-Hadamard Inequalities]{On Hermite-Hadamard Inequalities
for differentiable $\lambda $-preinvex functions via Riemann-Liouville
Fractional Integrals}
\author{Abdullah AKKURT}
\address{[Department of Mathematics, Faculty of Science and Arts, University
of Kahramanmara\c{s} S\"{u}t\c{c}\"{u} \.{I}mam, 46000, Kahramanmara\c{s},
Turkey}
\email{abdullahmat@gmail.com}
\author{M. Esra YILDIRIM}
\address{[Department of Mathematics, Faculty of Science, University of
Cumhuriyet, 58140, Sivas, Turkey}
\email{mesra@cumhuriyet.edu.tr}
\author{H\"{u}seyin YILDIRIM}
\address{[Department of Mathematics, Faculty of Science and Arts, University
of Kahramanmara\c{s} S\"{u}t\c{c}\"{u} \.{I}mam, 46000, Kahramanmara\c{s},
Turkey}
\email{hyildir@ksu.edu.tr}
\keywords{Fractional Hermite-Hadamard inequalities, preinvex functions,
Riemann-Liouville Fractional Integral. \textbf{\thanks{\textbf{2010
Mathematics Subject Classification.} 26A33, 26D15, 41A55}}}
\thanks{M.E. Yildirim was partially supported by the Scientific and
Technological Research Council of Turkey (TUBITAK Programme 2228-B)}

\begin{abstract}
In this paper, we consider a new class of convex functions which is called $%
\lambda $-preinvex functions. We prove several Hermite--Hadamard type
inequalities for differentiable $\lambda $-preinvex functions via Fractional
Integrals. Some special cases are also discussed.
\end{abstract}

\maketitle

\section{INTRODUCTION}

The convexity property of a given function plays an important role in
obtaining integral inequalities. Proving inequalities for convex functions
has a long and rich history in mathamatics. Let $f:I\subseteq 
\mathbb{R}
\rightarrow 
\mathbb{R}
$ be a convex mapping defined on the interval $I$ of real numbers and $%
a,b\in I$ with $a<b$. The following inequality:

\begin{equation}
\begin{array}{c}
f\left( \dfrac{a+b}{2}\right) \leq \dfrac{1}{b-a}\int\limits_{a}^{b}f(x)dx%
\leq \dfrac{f(a)+f(b)}{2}.%
\end{array}
\label{1.1}
\end{equation}%
is known in the literature as Hermite-Hadamard inequality for convex
mappings. Note that some of the classical inequalities for means can be
derived from (\ref{1.1}) for appropriate particular selections of the
mapping $f.$ Both inequalities hold in the reversed direction if $f$ is
concave.

Over the last decade, this classical inequality has been improved and
generalized in a number of ways; there have been a large number of research
papers written on this subject, (see, \cite{Barani}-\cite{13}) and the
references therein.

A significant generalization of convex functions is that of invex functions
introduced by Hanson in \cite{hanson}. Ben-Israel and Mond \cite{BM}
introduced the concept of preinvex functions, which is a special case of
invexity. Noor \cite{Noor}-\cite{N3} has established some Hermite-Hadamard
type inequalities for preinvex and logpreinvex functions. In recent papers
Barani, Ghazanfari, and Dragomir in \cite{D1} presented some estimates of
the right hand side of a Hermite- Hadamard type inequality in which some
preinvex functions are involved. His class of nonconvex functions include
the classical convex functions and its various classes as special cases. For
some recent results related to this nonconvex functions, see the papers (%
\cite{Noor}-\cite{N3}, \cite{Pini}).

Now, we will give some definitions, lemmas and notations which we use later
in this work.

\begin{definition}
$\left( \text{\cite{3}}\right) $ Let $f\in L\left[ a,b\right] .$The\
Riemann-Liouville fractional integral $J_{a^{+}}^{\alpha }f$ and $%
J_{b^{-}}^{\alpha }f$ of order $\alpha >0$ with $a>0$ are defined by%
\begin{equation}
\begin{array}{ll}
J_{a^{+}}^{\alpha }f\left( x\right) =\dfrac{1}{\Gamma \left( \alpha \right) }%
\int_{a}^{x}(x-t)^{\alpha -1}f\left( t\right) dt & ,0\leq a<x\leq b \\ 
&  \\ 
J_{b^{-}}^{\alpha }f\left( x\right) =\dfrac{1}{\Gamma \left( \alpha \right) }%
\int_{x}^{b}(t-x)^{\alpha -1}f\left( t\right) dt & ,0\leq a<x\leq b%
\end{array}
\label{2.1}
\end{equation}%
Where $\Gamma $ is the gamma function.
\end{definition}

\begin{definition}
$\left( \text{\cite{9}}\right) $ The incomplete beta function is defined as
follows:%
\begin{equation}
\begin{array}{c}
B_{x}\left( a,b\right) =\int_{0}^{x}t^{a-1}\left( 1-t\right) ^{b-1}dt,%
\end{array}
\label{2.2}
\end{equation}%
Here $x\in \left[ 0,1\right] ,a,b>0$.
\end{definition}

\begin{definition}
$\left( \text{\cite{Abramovitz}}\right) \ $Gaussian hypergeometric function
defined by%
\begin{equation}
\begin{array}{c}
_{2}F_{1}(a,b;c;z)=\dfrac{1}{\beta (b,c-b)}\int_{0}^{1}t^{b-1}\left(
1-t\right) ^{c-b-1}\left( 1-zt\right) ^{-a}dt%
\end{array}
\label{2.21}
\end{equation}%
Here $c>b>0,\ \left\vert z\right\vert <1.$
\end{definition}

\begin{definition}
$\left( \text{\cite{10}}\right) $ A function $f:I\subseteq 
\mathbb{R}
\rightarrow 
\mathbb{R}
$ is said to belong to the class $MT\left( I\right) $ if $f$ is positive and 
$\forall x,y\in I$ and $t\in \left( 0,1\right) $ satisfies the inequality:%
\begin{equation}
\begin{array}{c}
f\left( tx+\left( 1-t\right) y\right) \leq \dfrac{\sqrt{t}}{2\sqrt{1-t}}%
f\left( x\right) +\dfrac{\sqrt{1-t}}{2\sqrt{t}}f\left( y\right) .%
\end{array}
\label{2.3}
\end{equation}
\end{definition}

\begin{definition}
A function $f:I\subseteq 
\mathbb{R}
\rightarrow 
\mathbb{R}
$ is said to a $\lambda -MT-$convex function or said to belong to the class $%
\lambda -MT\left( I\right) $ if $f$ is positive and $\forall x,y\in I,$ $%
\lambda \in \left( 0,\frac{1}{2}\right] $ and $t\in \left( 0,1\right) $
satisfies the inequality:%
\begin{equation}
\begin{array}{c}
f\left( tx+\left( 1-t\right) y\right) \leq \dfrac{\sqrt{t}}{2\sqrt{1-t}}%
f\left( x\right) +\dfrac{\left( 1-\lambda \right) \sqrt{1-t}}{2\lambda \sqrt{%
t}}f\left( y\right) .%
\end{array}
\label{2.5}
\end{equation}%
Meanwhile, Sarikaya et al. \cite{12} presented the following important
integral identity including the first-order derivative of $f$ to establish
many interesting Hermite--Hadamard-type inequalities for convexity functions
via Riemann--Liouville fractional integrals of the order $\alpha \in 
\mathbb{R}
^{+}$.
\end{definition}

\begin{lemma}
\label{LZ} Let $f:\left[ a,b\right] \rightarrow 
\mathbb{R}
$ be a once differentiable mapping on $\left( a,b\right) $ for $a<b$. If $%
f^{\prime }\in L\left[ a,b\right] ,$ there is a following equality for
fractional integrals%
\begin{equation}
\begin{array}{l}
\dfrac{f\left( a\right) +f\left( b\right) }{2}-\dfrac{\Gamma \left( \alpha
+1\right) }{2\left( b-a\right) ^{\alpha }}\left[ J_{a^{+}}^{\alpha }f\left(
b\right) +J_{b^{-}}^{\alpha }f\left( a\right) \right] \\ 
\\ 
\ \ \ \ \ \ =\dfrac{b-a}{2}\int\limits_{0}^{1}\left[ \left( 1-t\right)
^{\alpha }-t^{\alpha }\right] f^{\prime }\left( ta+\left( 1-t\right)
b\right) dt.%
\end{array}
\label{2.6}
\end{equation}%
Also, Wang et al. \cite{13} presented the following inequality.
\end{lemma}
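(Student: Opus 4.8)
The plan is to prove the identity by starting from its right-hand side, integrating by parts, changing variables, and then recognising the one-sided Riemann--Liouville integrals from the definition recalled above. First I would split the integral on the right as $I_{1}-I_{2}$, where
\[
I_{1}=\int_{0}^{1}(1-t)^{\alpha }f^{\prime }\left( ta+(1-t)b\right) dt,\qquad I_{2}=\int_{0}^{1}t^{\alpha }f^{\prime }\left( ta+(1-t)b\right) dt .
\]
The two pieces are handled in the same way, so it is enough to carry out the computation for one of them and transcribe it for the other.

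For $I_{1}$ I would integrate by parts, using that $\tfrac{d}{dt}f\left( ta+(1-t)b\right) =(a-b)f^{\prime }\left( ta+(1-t)b\right) $, so that an antiderivative of the second factor is $\tfrac{1}{a-b}f\left( ta+(1-t)b\right) $. The boundary term reduces to $\tfrac{f(b)}{b-a}$, the $t=1$ endpoint being annihilated by $(1-t)^{\alpha }$, and the leftover integral is $\tfrac{\alpha }{a-b}\int_{0}^{1}(1-t)^{\alpha -1}f\left( ta+(1-t)b\right) dt$. Substituting $x=ta+(1-t)b$, so that $1-t=\tfrac{x-a}{b-a}$ and $dt=\tfrac{dx}{a-b}$, and using $\alpha \Gamma (\alpha )=\Gamma (\alpha +1)$, this integral turns into a constant multiple of $J_{b^{-}}^{\alpha }f(a)$, giving
\[
I_{1}=\frac{f(b)}{b-a}-\frac{\Gamma (\alpha +1)}{(b-a)^{\alpha +1}}J_{b^{-}}^{\alpha }f(a).
\]
Running the same argument for $I_{2}$, where now it is the $t=0$ endpoint that vanishes and the substitution produces the kernel $(b-x)^{\alpha -1}$ belonging to $J_{a^{+}}^{\alpha }$, gives $I_{2}=-\tfrac{f(a)}{b-a}+\tfrac{\Gamma (\alpha +1)}{(b-a)^{\alpha +1}}J_{a^{+}}^{\alpha }f(b)$.

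Finally I would multiply $I_{1}-I_{2}$ by $\tfrac{b-a}{2}$ and collect terms: the $f(a)$ and $f(b)$ parts combine to $\tfrac{f(a)+f(b)}{2}$, while the fractional-integral parts combine to $-\tfrac{\Gamma (\alpha +1)}{2(b-a)^{\alpha }}\left[ J_{a^{+}}^{\alpha }f(b)+J_{b^{-}}^{\alpha }f(a)\right] $, which is exactly the left-hand side of the stated equality.

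I do not anticipate a conceptual obstacle; the proof is a routine computation. The only points needing care are bookkeeping: tracking the sign of $a-b$ through the substitution, correctly reversing the orientation of the limits of integration after the change of variables, and pairing each power $(1-t)^{\alpha -1}$ and $t^{\alpha -1}$ with the correct one-sided fractional integral (namely $J_{b^{-}}^{\alpha }$ and $J_{a^{+}}^{\alpha }$ respectively, which is easy to interchange by mistake). The hypothesis $f^{\prime }\in L\left[ a,b\right] $ is precisely what guarantees that all the integrals in sight are finite and that the integration by parts is valid.
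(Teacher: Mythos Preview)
Your proof is correct and follows essentially the same route as the paper. The paper does not prove this lemma directly (it is quoted from \cite{12}), but it proves the generalization Lemma~\ref{L1} by the same method: integrate the right-hand side by parts, change variables to recover the Riemann--Liouville kernels, and multiply through by the appropriate constant. The only cosmetic difference is that you split the integrand into $I_{1}-I_{2}$ and integrate each piece by parts separately, whereas the paper keeps $[(1-t)^{\alpha}-t^{\alpha}]$ together in a single integration by parts; the computations are otherwise identical.
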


\begin{lemma}
Let $f:\left[ a,b\right] \rightarrow 
\mathbb{R}
$ be a twice differentiable mapping on $\left( a,b\right) $ for $a<b$. If $%
f^{\prime \prime }\in L\left[ a,b\right] ,$ there is following equality for
fractional integrals%
\begin{equation}
\begin{array}{l}
\dfrac{f\left( a\right) +f\left( b\right) }{2}-\dfrac{\Gamma \left( \alpha
+1\right) }{2\left( b-a\right) ^{\alpha }}\left[ J_{a^{+}}^{\alpha }f\left(
b\right) +J_{b^{-}}^{\alpha }f\left( a\right) \right] \\ 
\\ 
=\dfrac{\left( b-a\right) ^{2}}{2}\int_{0}^{1}\left[ \dfrac{1-\left(
1-t\right) ^{\alpha +1}-t^{\alpha +1}}{\alpha +1}\right] f^{\prime \prime
}\left( ta+\left( 1-t\right) b\right) dt.%
\end{array}
\label{2.7}
\end{equation}%
In \cite{Dragomir}, Dragomir and Agarwal established the following result
connected with the right part of (\ref{1.1}):
\end{lemma}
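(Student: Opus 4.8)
The plan is to deduce \eqref{2.7} from the first-order fractional identity \eqref{2.6} of Lemma \ref{LZ} by a single integration by parts, the key observation being that the kernel occurring in \eqref{2.7} is a convenient antiderivative of the kernel occurring in \eqref{2.6}. Set
\[
K(t):=\frac{1-(1-t)^{\alpha+1}-t^{\alpha+1}}{\alpha+1},\qquad t\in[0,1].
\]
First I would record the two elementary facts $K'(t)=(1-t)^{\alpha}-t^{\alpha}$ and $K(0)=K(1)=0$. Since $f''\in L[a,b]$, the derivative $f'$ is absolutely continuous on $[a,b]$, so integration by parts is legitimate; using the chain rule in the form $\frac{d}{dt}f'(ta+(1-t)b)=(a-b)\,f''(ta+(1-t)b)$ one obtains
\[
\int_{0}^{1}\left[(1-t)^{\alpha}-t^{\alpha}\right]f'(ta+(1-t)b)\,dt=\left[K(t)\,f'(ta+(1-t)b)\right]_{0}^{1}+(b-a)\int_{0}^{1}K(t)\,f''(ta+(1-t)b)\,dt ,
\]
and the boundary term vanishes because $K(0)=K(1)=0$. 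Substituting this into the right-hand side of \eqref{2.6} and retaining the prefactor $\frac{b-a}{2}$ turns it into $\frac{(b-a)^{2}}{2}\int_{0}^{1}K(t)f''(ta+(1-t)b)\,dt$, which is precisely the right-hand side of \eqref{2.7}. Since the left-hand sides of \eqref{2.6} and \eqref{2.7} are literally the same expression, the identity follows.

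Alternatively, the identity can be established in a self-contained way, without quoting Lemma \ref{LZ}, by two successive integrations by parts applied directly to $\frac{(b-a)^{2}}{2}\int_{0}^{1}K(t)f''(ta+(1-t)b)\,dt$. The first integration by parts contributes no boundary term (again because $K(0)=K(1)=0$) and reproduces the first-order integrand $\frac{b-a}{2}\int_{0}^{1}[(1-t)^{\alpha}-t^{\alpha}]f'(ta+(1-t)b)\,dt$; the second one produces a boundary term which, together with the surviving prefactor, equals $\frac{f(a)+f(b)}{2}$, minus an integral $\frac{\alpha}{2}\int_{0}^{1}[(1-t)^{\alpha-1}+t^{\alpha-1}]f(ta+(1-t)b)\,dt$ that, after the substitution $x=ta+(1-t)b$ and the definition \eqref{2.1} of the Riemann--Liouville integrals, equals $\frac{\Gamma(\alpha+1)}{2(b-a)^{\alpha}}\left[J_{a^{+}}^{\alpha}f(b)+J_{b^{-}}^{\alpha}f(a)\right]$. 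Collecting these contributions reproduces the left-hand side of \eqref{2.7}.

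The whole argument is essentially bookkeeping, so I do not expect a genuine analytic obstacle. The points that need care are the correct sign in the chain rule $\frac{d}{dt}f(ta+(1-t)b)=(a-b)f'(ta+(1-t)b)$, the verification that $K$ carries exactly the normalization stated above so that both boundary terms vanish, and, in the self-contained version, the matching of the powers $(1-t)^{\alpha-1}$ and $t^{\alpha-1}$ with the weights $(b-x)^{\alpha-1}$ and $(x-a)^{\alpha-1}$ after the change of variable, which is what produces the factor $\Gamma(\alpha)/(b-a)^{\alpha}$ and hence the constant $\Gamma(\alpha+1)/(b-a)^{\alpha}$. Integrability of $f''$ on $[a,b]$ is all that is needed to justify every step.
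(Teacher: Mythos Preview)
Your argument is correct. The paper does not prove this lemma directly---it is quoted from Wang et al.\ \cite{13}---but it does prove the more general Lemma~\ref{L2} (with $\eta(b,a)$ in place of $b-a$), and the method there is exactly your first approach: one integration by parts, whose boundary term vanishes because the kernel $K(t)=\frac{1-(1-t)^{\alpha+1}-t^{\alpha+1}}{\alpha+1}$ satisfies $K(0)=K(1)=0$, reducing the second-order identity to the first-order identity of Lemma~\ref{LZ} (respectively Lemma~\ref{L1}). Your self-contained alternative via two successive integrations by parts is also fine and simply unfolds the proof of Lemma~\ref{LZ} into the argument.
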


\begin{theorem}
\label{TD} Let $f:I^{\circ }\subseteq 
\mathbb{R}
\rightarrow 
\mathbb{R}
$ be a differentiable mapping on $I^{\circ }$, $a,b\in I\ $with $a<b$. If $%
\left\vert f^{\prime }\right\vert $ is convex on $\left[ a,b\right] ,\ $then
the following inequalitiy holds:%
\begin{equation}
\begin{array}{l}
\dfrac{f\left( a\right) +f\left( b\right) }{2}-\dfrac{1}{b-a}%
\dint\limits_{a}^{b}f(x)dx\leq \dfrac{b-a}{8}\left[ \left\vert f^{\prime
}\left( a\right) \right\vert +\left\vert f^{\prime }\left( b\right)
\right\vert \right] .%
\end{array}
\label{2.70}
\end{equation}
\end{theorem}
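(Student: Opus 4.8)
The plan is to reduce Theorem \ref{TD} to the $\alpha =1$ specialization of Lemma \ref{LZ} and then estimate the resulting integral using the convexity of $\left\vert f^{\prime }\right\vert $. First I would observe that putting $\alpha =1$ in the identity (\ref{2.6}) gives
\[
\frac{f\left( a\right) +f\left( b\right) }{2}-\frac{1}{b-a}\int_{a}^{b}f(x)\,dx=\frac{b-a}{2}\int_{0}^{1}\left( 1-2t\right) f^{\prime }\left( ta+\left( 1-t\right) b\right) \,dt,
\]
since $\left( 1-t\right) ^{1}-t^{1}=1-2t$, $\Gamma \left( 2\right) =1$, and $J_{a^{+}}^{1}f\left( b\right) +J_{b^{-}}^{1}f\left( a\right) =2\int_{a}^{b}f(t)\,dt$. (Equivalently, this identity is obtained directly by integration by parts; it is the classical Dragomir--Agarwal identity.)

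Next I would take absolute values on both sides and pass the modulus inside the integral, which yields
\[
\left\vert \frac{f\left( a\right) +f\left( b\right) }{2}-\frac{1}{b-a}\int_{a}^{b}f(x)\,dx\right\vert \leq \frac{b-a}{2}\int_{0}^{1}\left\vert 1-2t\right\vert \left\vert f^{\prime }\left( ta+\left( 1-t\right) b\right) \right\vert \,dt.
\]
Then, using that $\left\vert f^{\prime }\right\vert $ is convex on $\left[ a,b\right] $, I bound $\left\vert f^{\prime }\left( ta+\left( 1-t\right) b\right) \right\vert \leq t\left\vert f^{\prime }\left( a\right) \right\vert +\left( 1-t\right) \left\vert f^{\prime }\left( b\right) \right\vert $, so the right-hand side splits into $\left\vert f^{\prime }\left( a\right) \right\vert \int_{0}^{1}t\left\vert 1-2t\right\vert \,dt+\left\vert f^{\prime }\left( b\right) \right\vert \int_{0}^{1}\left( 1-t\right) \left\vert 1-2t\right\vert \,dt$, up to the factor $\tfrac{b-a}{2}$.

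Finally I would evaluate these two constants by splitting the range of integration at $t=1/2$, where $1-2t$ changes sign; a short computation shows each integral equals $\tfrac14$ (the two coincide under the substitution $t\mapsto 1-t$). Substituting back gives the bound $\tfrac{b-a}{2}\cdot \tfrac14\left( \left\vert f^{\prime }\left( a\right) \right\vert +\left\vert f^{\prime }\left( b\right) \right\vert \right) =\tfrac{b-a}{8}\left[ \left\vert f^{\prime }\left( a\right) \right\vert +\left\vert f^{\prime }\left( b\right) \right\vert \right] $, which is exactly (\ref{2.70}). There is no substantial obstacle; the only step needing care is the correct treatment of $\left\vert 1-2t\right\vert $ when computing the two constants, since the integrand changes sign at the midpoint.
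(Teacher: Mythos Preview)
Your proof is correct; it is precisely the classical Dragomir--Agarwal argument. The paper does not itself prove Theorem~\ref{TD}: the result is quoted from \cite{Dragomir} as background, and the only place the paper connects it to its own results is the Remark after Theorem~\ref{T1}, which asserts that Theorem~\ref{T1} with $\eta(b,a)=b-a$, $\lambda=\tfrac12$, $\alpha=1$ reduces to Theorem~\ref{TD}. That remark, however, is under the hypothesis that $|f'|$ is $\lambda$-preinvex (i.e.\ MT-convex when $\lambda=\tfrac12$), not that $|f'|$ is convex, so it does not literally recover Theorem~\ref{TD} under the stated convexity hypothesis. Your direct route via the $\alpha=1$ case of Lemma~\ref{LZ} together with the ordinary convexity bound $|f'(ta+(1-t)b)|\le t|f'(a)|+(1-t)|f'(b)|$ is the standard proof and the appropriate one for the statement as given.
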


\begin{lemma}
$\left( \text{\cite{Sumeyye}}\right) $ For any $A_{1}>A_{2}\geq 0$ and $%
p\geq 1,\ \left( A_{1}-A_{2}\right) ^{P}\leq A_{1}^{P}-A_{2}^{P}$.
\end{lemma}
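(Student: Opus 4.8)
The plan is to reduce the inequality to a one-variable statement by homogeneity and then finish with two elementary estimates for powers of numbers in $[0,1]$. Since $A_{1}>A_{2}\geq 0$ forces $A_{1}>0$, I would divide the desired inequality $\left( A_{1}-A_{2}\right) ^{p}\leq A_{1}^{p}-A_{2}^{p}$ through by $A_{1}^{p}>0$ and set $t=A_{2}/A_{1}\in \left[ 0,1\right) $; the claim then becomes the equivalent assertion
\[
\left( 1-t\right) ^{p}\leq 1-t^{p},\qquad t\in \left[ 0,1\right) ,\ p\geq 1.
\]

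Next I would establish this via two sub-estimates. First, because $0\leq 1-t\leq 1$ and $p\geq 1$, raising a number of $\left[ 0,1\right] $ to a larger exponent can only decrease it, so $\left( 1-t\right) ^{p}\leq \left( 1-t\right) ^{1}=1-t$. Second, for the same reason applied to $t\in \left[ 0,1\right) $ we have $t^{p}\leq t$, hence $1-t\leq 1-t^{p}$. Chaining the two gives $\left( 1-t\right) ^{p}\leq 1-t\leq 1-t^{p}$, which is exactly what is needed; multiplying back by $A_{1}^{p}$ yields $\left( A_{1}-A_{2}\right) ^{p}\leq A_{1}^{p}-A_{2}^{p}$. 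As an alternative that avoids even mentioning the normalization, I could instead fix $A_{2}\geq 0$, set $g\left( x\right) =x^{p}-A_{2}^{p}-\left( x-A_{2}\right) ^{p}$ for $x\geq A_{2}$, observe $g\left( A_{2}\right) =0$, and note $g^{\prime }\left( x\right) =p\,x^{p-1}-p\left( x-A_{2}\right) ^{p-1}\geq 0$ since $x\geq x-A_{2}\geq 0$ and $u\mapsto u^{p-1}$ is nondecreasing on $\left[ 0,\infty \right) $ when $p\geq 1$; then $g$ is nondecreasing and $g\left( A_{1}\right) \geq g\left( A_{2}\right) =0$.

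There is essentially no hard step here: the proof is a short monotonicity argument. The only points deserving a word of care are that $A_{1}>0$ (so the division is legitimate) and the monotonicity of $u\mapsto u^{r}$ for the exponent $r=p-1\geq 0$, which also correctly handles the boundary case $p=1$, where the stated inequality degenerates to an equality.
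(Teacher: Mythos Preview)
Your proof is correct. The paper itself does not prove this lemma; it merely quotes it with a citation to \cite{Sumeyye}, so there is no original argument to compare against. Both of your approaches---the homogeneity reduction to $(1-t)^{p}\leq 1-t^{p}$ for $t\in[0,1)$, and the monotonicity argument via $g(x)=x^{p}-A_{2}^{p}-(x-A_{2})^{p}$---are standard, complete, and handle the boundary case $p=1$ properly.
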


\begin{lemma}
$\left( \text{\cite{14}}\right) $ For $t\in \left[ 0,1\right] ,$we have%
\begin{equation}
\begin{array}{cc}
\left( 1-t\right) ^{m}\leq 2^{1-m}-t^{m} & for\text{ }m\in \left[ 0,1\right]
, \\ 
\left( 1-t\right) ^{m}\geq 2^{1-m}-t^{m} & for\text{ }m\in \left[ 1,\infty
\right) .%
\end{array}
\label{2.71}
\end{equation}%
Let $%
\mathbb{R}
^{n}$ be Euclidian space and $K$ is said to a nonempty closed in $%
\mathbb{R}
^{n}.$ Let $f:K\rightarrow 
\mathbb{R}
\ $and $\eta :K\times K\rightarrow 
\mathbb{R}
$ be a continuous functions.
\end{lemma}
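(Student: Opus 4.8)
The plan is to read both inequalities in (\ref{2.71}) as a single application of Jensen's inequality (equivalently, midpoint convexity) to the one‑variable power function $\phi_m(x)=x^m$ on $[0,\infty)$, evaluated at the two points $t$ and $1-t$ with equal weights $\tfrac12$. Since $t+(1-t)=1$, the weighted mean of $t$ and $1-t$ is always $\tfrac12$, so Jensen gives $\phi_m\!\left(\tfrac12\right)\le \tfrac12\bigl[\phi_m(t)+\phi_m(1-t)\bigr]$ when $\phi_m$ is convex and the reverse inequality when $\phi_m$ is concave. Multiplying by $2$ and using $\phi_m(\tfrac12)=2^{-m}$ turns these into $2^{1-m}\le t^m+(1-t)^m$ and $2^{1-m}\ge t^m+(1-t)^m$ respectively, which are exactly the two lines of (\ref{2.71}) after transposing $t^m$.

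First I would record the relevant convexity facts: $\phi_m$ is convex on $[0,\infty)$ for $m\in[1,\infty)$ and concave there for $m\in[0,1]$, since on $(0,\infty)$ its second derivative equals $m(m-1)x^{m-2}$, whose sign is that of $m(m-1)$. Feeding $t,\ 1-t\in[0,1]$ with weights $\tfrac12,\tfrac12$ into Jensen's inequality then yields the lower estimate $2^{1-m}\le t^m+(1-t)^m$ in the convex range and the upper estimate in the concave range. As an alternative one may argue directly with $g(t)=t^m+(1-t)^m$: the symmetry $g(t)=g(1-t)$ forces any interior extremum to sit at $t=\tfrac12$, and $g''(t)=m(m-1)\bigl[t^{m-2}+(1-t)^{m-2}\bigr]$ identifies that point as a maximum for $0<m<1$ and a minimum for $m>1$, with $g(\tfrac12)=2^{1-m}$; this also shows the constant $2^{1-m}$ is sharp.

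The only points that need a little care — the closest thing to an obstacle, and a mild one — are the boundary values. For $m=1$ both sides of each line of (\ref{2.71}) equal $1$ and for $m=0$ both equal $2$, so equality holds and the (non‑strict) inequalities are immediate; the endpoints $t\in\{0,1\}$ are covered by continuity of $x\mapsto x^m$ at $0$ for $m>0$ and by the convention $0^{0}=1$. The remaining sentences in the statement — $\mathbb{R}^{n}$ Euclidean space, $K$ a nonempty closed subset, and $f:K\to\mathbb{R}$, $\eta:K\times K\to\mathbb{R}$ continuous — are standing notation for the invexity framework used in the sequel and carry no assertion to prove.
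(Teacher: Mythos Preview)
Your argument is correct: rewriting each line of (\ref{2.71}) as $t^{m}+(1-t)^{m}\gtrless 2^{1-m}$ and recognising this as midpoint Jensen for $\phi_{m}(x)=x^{m}$, concave on $[0,\infty)$ for $m\in[0,1]$ and convex for $m\ge 1$, is exactly the right reduction, and your handling of the boundary cases $m\in\{0,1\}$ and $t\in\{0,1\}$ is fine.

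As for comparison with the paper: there is nothing to compare. The paper does not prove this lemma at all; it simply quotes it from \cite{14} as a known auxiliary inequality. So your Jensen argument is not a different route but rather supplies a proof where the authors give only a reference. You are also right that the trailing sentences about $\mathbb{R}^{n}$, $K$, $f$ and $\eta$ are standing hypotheses for the invexity setting of the next definitions and carry no claim to be proved.
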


\begin{definition}
$($\cite{Noor}$)$ Let $u\in K.$ The set $K$ is said to be invex at $u$
according to $\eta $ if%
\begin{equation}
u+t\eta (v,u)\in K  \label{2.8}
\end{equation}%
for all $u,v\in K\ $and $t\in \lbrack 0,1].$
\end{definition}

\begin{definition}
\label{D7} Let $f:I\subseteq 
\mathbb{R}
\rightarrow 
\mathbb{R}
$ be a nonnegative function.A function $f$ on the set $K_{\eta }$ is said to
be $\lambda -$preinvex\ function according to bifunction $\eta $ and $%
\forall u,v\in I$, $t\in \left( 0,1\right) $, then 
\begin{equation}
\begin{array}{c}
f\left( u+t\eta \left( v,u\right) \right) \leq \dfrac{\sqrt{t}}{2\sqrt{1-t}}%
f\left( v\right) +\dfrac{\left( 1-\lambda \right) \sqrt{1-t}}{2\lambda \sqrt{%
t}}f\left( u\right) .%
\end{array}
\label{2.9}
\end{equation}
\end{definition}

\begin{remark}
In Definition \ref{D7}, if we choose $\lambda =\frac{1}{2},$ and $\eta
\left( v,u\right) =v-u.$ Definition \ref{D7} reduces to Definition $4;$%
\begin{equation*}
\begin{array}{c}
f\left( tv+\left( 1-t\right) u\right) \leq \dfrac{\sqrt{t}}{2\sqrt{1-t}}%
f\left( v\right) +\dfrac{\sqrt{1-t}}{2\sqrt{t}}f\left( u\right) .%
\end{array}%
\end{equation*}
\end{remark}

\begin{remark}
In Definition \ref{D7}, if we choose $\eta \left( v,u\right) =v-u.$
Definition \ref{D7} reduces to Definition $5;$%
\begin{equation*}
\begin{array}{c}
f\left( tv+\left( 1-t\right) u\right) \leq \frac{\sqrt{t}}{2\sqrt{1-t}}%
f\left( v\right) +\frac{\left( 1-\lambda \right) \sqrt{1-t}}{2\lambda \sqrt{t%
}}f\left( u\right) .%
\end{array}%
\end{equation*}%
Our goal in this paper is to state and prove the Hermite-Hadamard type
inequality for preinvex functions via Riemann-Liouville Fractional
Integrals. In order to achieve our goal, we first give two important lemmas
and then by using these identities we prove some integral inequalities.
\end{remark}

\section{Main Results}

We need the following lemma \cite{imdat}.

\begin{lemma}
\label{L1} Let $A\subseteq 
\mathbb{R}
$ be an open invex subset with respect to $\eta :A\times A\rightarrow 
\mathbb{R}
\ $and $a,b\in A\ $with $a<a+\eta (b,a).\ $If$\ f:A\rightarrow 
\mathbb{R}
\ $is a differentiable function such that $f^{\prime }\in L\left[ a,a+\eta
(b,a)\right] \ $then$,$ the following equality holds:%
\begin{equation}
\begin{array}{l}
\dfrac{f\left( a\right) +f\left( a+\eta (b,a)\right) }{2}-\dfrac{\Gamma
\left( \alpha +1\right) }{2\left( \eta (b,a)\right) ^{\alpha }}\left[
J_{a^{+}}^{\alpha }f\left( a+\eta (b,a)\right) +J_{\left( a+\eta
(b,a)\right) ^{-}}^{\alpha }f\left( a\right) \right]  \\ 
\\ 
=\dfrac{\eta (b,a)}{2}\int_{0}^{1}\left[ \left( 1-t\right) ^{\alpha
}-t^{\alpha }\right] f^{\prime }\left( a+(1-t)\eta (b,a)\right) dt.%
\end{array}
\label{3.1}
\end{equation}
\end{lemma}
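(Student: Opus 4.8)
The plan is to start from the right-hand side, denoted $I := \dfrac{\eta(b,a)}{2}\int_0^1\left[(1-t)^\alpha - t^\alpha\right]f'\left(a+(1-t)\eta(b,a)\right)dt$, split it as $I = \dfrac{\eta(b,a)}{2}(I_1 - I_2)$ with $I_1 = \int_0^1(1-t)^\alpha f'(a+(1-t)\eta(b,a))\,dt$ and $I_2 = \int_0^1 t^\alpha f'(a+(1-t)\eta(b,a))\,dt$, and evaluate each piece by integration by parts. For $I_1$, integrating by parts with $u = (1-t)^\alpha$ and $dv = f'(a+(1-t)\eta(b,a))\,dt$ — noting $\frac{d}{dt}f(a+(1-t)\eta(b,a)) = -\eta(b,a)f'(a+(1-t)\eta(b,a))$ — produces a boundary term and an integral of $(1-t)^{\alpha-1}f(a+(1-t)\eta(b,a))$. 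Similarly for $I_2$ with $u = t^\alpha$. The boundary terms should combine to give $\dfrac{f(a)+f(a+\eta(b,a))}{2}$ after multiplying by $\dfrac{\eta(b,a)}{2}$ and dividing appropriately.

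The key step after integration by parts is to recognize the remaining integrals as Riemann–Liouville fractional integrals. Concretely, I would perform the substitution $x = a+(1-t)\eta(b,a)$, so that $t = \dfrac{a+\eta(b,a)-x}{\eta(b,a)}$, $1-t = \dfrac{x-a}{\eta(b,a)}$, and $dx = -\eta(b,a)\,dt$; as $t$ runs from $0$ to $1$, $x$ runs from $a+\eta(b,a)$ down to $a$. Under this change of variables, $\int_0^1 (1-t)^{\alpha-1}f(a+(1-t)\eta(b,a))\,dt$ becomes $\dfrac{1}{(\eta(b,a))^\alpha}\int_a^{a+\eta(b,a)}(x-a)^{\alpha-1}f(x)\,dx = \dfrac{\Gamma(\alpha)}{(\eta(b,a))^\alpha}J_{a^+}^\alpha f(a+\eta(b,a))$, and analogously $\int_0^1 t^{\alpha-1}f(a+(1-t)\eta(b,a))\,dt = \dfrac{\Gamma(\alpha)}{(\eta(b,a))^\alpha}J_{(a+\eta(b,a))^-}^\alpha f(a)$. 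Assembling everything and using $\Gamma(\alpha+1) = \alpha\Gamma(\alpha)$ should yield the claimed identity.

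I expect the main obstacle to be purely bookkeeping: tracking the signs introduced by the factor $-\eta(b,a)$ from differentiating the inner argument, the sign flip from reversing the limits of integration in the substitution, and the factor $\alpha$ that appears when differentiating $(1-t)^\alpha$ and $t^\alpha$ — making sure these conspire to produce exactly $\dfrac{\Gamma(\alpha+1)}{2(\eta(b,a))^\alpha}$ in front of the bracketed fractional-integral term with the correct overall sign. A secondary point worth checking is that the boundary term from $I_1$ at $t=1$ and from $I_2$ at $t=0$ vanish (since $(1-t)^\alpha$ and $t^\alpha$ vanish there for $\alpha>0$), so that only the terms giving $f(a)$ and $f(a+\eta(b,a))$ survive. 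This lemma is the natural analogue of Lemma \ref{LZ} with $b$ replaced by $a+\eta(b,a)$, so no genuinely new difficulty arises beyond careful computation.
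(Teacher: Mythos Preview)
Your approach is correct and essentially identical to the paper's: the paper integrates the bracket $[(1-t)^\alpha - t^\alpha]$ by parts in one stroke rather than splitting into $I_1$ and $I_2$, performs the same substitution $x=a+(1-t)\eta(b,a)$, and then multiplies by $\eta(b,a)/2$ --- a purely cosmetic difference. One small bookkeeping slip to fix when you write it up: under your substitution $(1-t)^{\alpha-1}$ becomes $\bigl((x-a)/\eta(b,a)\bigr)^{\alpha-1}$, so $\int_0^1 (1-t)^{\alpha-1}f(a+(1-t)\eta(b,a))\,dt$ actually equals $\dfrac{\Gamma(\alpha)}{(\eta(b,a))^\alpha}J_{(a+\eta(b,a))^-}^\alpha f(a)$, and the $t^{\alpha-1}$ integral gives $J_{a^+}^\alpha f(a+\eta(b,a))$ --- you have the two labels interchanged, though since they enter the identity as a sum this does not affect the final result.
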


\begin{proof}
Integrating by part and changing the variable of definite integral yield%
\begin{equation}
\begin{array}{l}
\int_{0}^{1}\left[ \left( 1-t\right) ^{\alpha }-t^{\alpha }\right] f^{\prime
}\left( a+\left( 1-t\right) \eta (b,a)\right) dt \\ 
\\ 
=\left. \left[ \left( 1-t\right) ^{\alpha }-t^{\alpha }\right] \dfrac{%
f\left( a+\left( 1-t\right) \eta (b,a)\right) }{-\eta (b,a)}\right\vert
_{0}^{1} \\ 
\\ 
-\dfrac{\alpha }{\eta (b,a)}\int_{0}^{1}\left[ \left( 1-t\right) ^{\alpha
-1}+t^{\alpha -1}\right] f\left( a+\left( 1-t\right) \eta (b,a)\right) dt \\ 
\\ 
=\dfrac{f\left( a\right) +f\left( a+\eta (b,a)\right) }{\eta (b,a)}-\dfrac{%
\alpha }{\eta (b,a)}\left[ \dfrac{1}{\left( \eta (b,a)\right) ^{\alpha }}%
\int_{a}^{a+\eta (b,a)}\left( a+\eta (b,a)-x\right) ^{\alpha -1}f\left(
x\right) dx\right. \\ 
\\ 
+\left. \dfrac{1}{\left( \eta (b,a)\right) ^{\alpha }}\int_{a}^{a+\eta
(b,a)}\left( x-a\right) ^{\alpha -1}f\left( x\right) dx\right] \\ 
\\ 
=\dfrac{f\left( a\right) +f\left( a+\eta (b,a)\right) }{\eta (b,a)}-\dfrac{%
\Gamma \left( \alpha +1\right) }{\left( \eta (b,a)\right) ^{\alpha +1}}\left[
J_{a^{+}}^{\alpha }f\left( a+\eta (b,a)\right) +J_{\left( a+\eta
(b,a)\right) ^{-}}^{\alpha }f\left( a\right) \right] .%
\end{array}
\label{3.2}
\end{equation}%
By multiplying the both sides of (\ref{3.2}) by $\dfrac{\eta (b,a)}{2},$ we
have:%
\begin{equation*}
\begin{array}{l}
\dfrac{f\left( a\right) +f\left( a+\eta (b,a)\right) }{2}-\dfrac{\Gamma
\left( \alpha +1\right) }{2\left( \eta (b,a)\right) ^{\alpha }}\left[
J_{a^{+}}^{\alpha }f\left( a+\eta (b,a)\right) +J_{\left( a+\eta
(b,a)\right) ^{-}}^{\alpha }f\left( a\right) \right] \\ 
\\ 
=\dfrac{\eta (b,a)}{2}\int_{0}^{1}\left[ \left( 1-t\right) ^{\alpha
}-t^{\alpha }\right] f^{\prime }\left( a+\left( 1-t\right) \eta (b,a)\right)
dt.%
\end{array}%
\end{equation*}%
Lemma \ref{L1} is thus proved.
\end{proof}

\begin{remark}
In Lemma \ref{L1}, if we choose $\eta \left( b,a\right) =b-a\ ,$ Lemma \ref%
{L1} reduces to Lemma$\ $\ref{LZ};
\end{remark}

\begin{theorem}
\label{T1} Let $A\subseteq 
\mathbb{R}
$ be an open invex subset with respect to $\eta :A\times A\rightarrow 
\mathbb{R}
\ $and $a,b\in A\ $with $a<a+\eta (b,a).\ $Suppose that $f:A\rightarrow 
\mathbb{R}
\ $is a differentiable function such that $f^{\prime }\in L\left[ a,a+\eta
(b,a)\right] .\ $If $\left\vert f^{\prime }\right\vert $ is $\lambda -$%
preinvex function on $\left[ a,a+\eta (b,a)\right] \ $then the following
inequality for fractional integrals with $\alpha >0$\ holds:%
\begin{equation*}
\begin{array}{l}
\left\vert \dfrac{f\left( a\right) +f\left( a+\eta (b,a)\right) }{2}-\dfrac{%
\Gamma \left( \alpha +1\right) }{2\left( \eta (b,a)\right) ^{\alpha }}\left[
J_{a^{+}}^{\alpha }f\left( a+\eta (b,a)\right) +J_{\left( a+\eta
(b,a)\right) ^{-}}^{\alpha }f\left( a\right) \right] \right\vert  \\ 
\\ 
\leq \dfrac{\eta (b,a)}{8}\left[ \left\vert f^{\prime }\left( a\right)
\right\vert +\dfrac{1-\lambda }{\lambda }\left\vert f^{\prime }\left(
b\right) \right\vert \right] \left\{ \dfrac{2\sqrt{\pi }\Gamma \left( \alpha
+\frac{3}{2}\right) }{\Gamma (\alpha +2)}-\dfrac{\sqrt{\pi }\Gamma \left(
\alpha +\frac{1}{2}\right) }{\Gamma (\alpha +2)}-4B_{\frac{1}{2}}\left(
\alpha +\frac{3}{2},\frac{1}{2}\right) \right.  \\ 
\\ 
+\dfrac{2^{-\alpha }\left( -\left( 4\alpha ^{2}+18\alpha +19\right)
\,_{2}F_{1}\left( 1,\alpha +2;\frac{1}{2};\frac{1}{2}\right) -2(\alpha
+2)\,_{2}F_{1}\left( 1,\alpha +2;-\frac{1}{2};\frac{1}{2}\right) \right) }{%
4\alpha ^{2}+8\alpha +3} \\ 
\\ 
\left. +\dfrac{2^{-\alpha }\left( -\alpha +2^{\alpha +\frac{1}{2}%
}\,_{2}F_{1}\left( -\frac{1}{2},\frac{1}{2}-\alpha ;\frac{1}{2};\frac{1}{2}%
\right) -1\right) }{\alpha (\alpha +1)}\right\} .%
\end{array}%
\end{equation*}
\end{theorem}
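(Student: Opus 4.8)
The plan is to derive the inequality from the fractional identity of Lemma~\ref{L1}, the triangle inequality, and the defining inequality of a $\lambda$-preinvex function (Definition~\ref{D7}), and then to evaluate the resulting one-variable definite integral in closed form.

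First, taking absolute values in the identity of Lemma~\ref{L1} and passing the modulus inside the integral gives
\begin{align*}
&\left\vert \frac{f(a)+f(a+\eta(b,a))}{2}-\frac{\Gamma(\alpha+1)}{2(\eta(b,a))^{\alpha}}\Big[J_{a^{+}}^{\alpha}f(a+\eta(b,a))+J_{(a+\eta(b,a))^{-}}^{\alpha}f(a)\Big]\right\vert\\
&\qquad\leq\frac{\eta(b,a)}{2}\int_{0}^{1}\left\vert (1-t)^{\alpha}-t^{\alpha}\right\vert\,\left\vert f^{\prime}\!\left(a+(1-t)\eta(b,a)\right)\right\vert\,dt.
\end{align*}
Since $1-t\in(0,1)$ for every $t\in(0,1)$, I would apply Definition~\ref{D7} with $u=a$, $v=b$ and with $t$ replaced by $1-t$, which yields
\begin{equation*}
\left\vert f^{\prime}\!\left(a+(1-t)\eta(b,a)\right)\right\vert\leq\frac{\sqrt{1-t}}{2\sqrt{t}}\left\vert f^{\prime}(b)\right\vert+\frac{(1-\lambda)\sqrt{t}}{2\lambda\sqrt{1-t}}\left\vert f^{\prime}(a)\right\vert .
\end{equation*}

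Substituting this bound, the estimate becomes a combination of $\left\vert f^{\prime}(b)\right\vert\int_{0}^{1}\left\vert (1-t)^{\alpha}-t^{\alpha}\right\vert\frac{\sqrt{1-t}}{2\sqrt{t}}\,dt$ and $\frac{1-\lambda}{\lambda}\left\vert f^{\prime}(a)\right\vert\int_{0}^{1}\left\vert (1-t)^{\alpha}-t^{\alpha}\right\vert\frac{\sqrt{t}}{2\sqrt{1-t}}\,dt$. The change of variable $t\mapsto1-t$ leaves $\left\vert (1-t)^{\alpha}-t^{\alpha}\right\vert$ unchanged and interchanges the two weights $\frac{\sqrt{1-t}}{\sqrt{t}}$ and $\frac{\sqrt{t}}{\sqrt{1-t}}$, so both of these integrals equal $\tfrac12 I$, where
\begin{equation*}
I:=\int_{0}^{1}\left\vert (1-t)^{\alpha}-t^{\alpha}\right\vert\,\frac{\sqrt{1-t}}{\sqrt{t}}\,dt .
\end{equation*}
This coincidence is precisely why $\left\vert f^{\prime}(a)\right\vert$ and $\left\vert f^{\prime}(b)\right\vert$ end up attached to the same bracketed factor $\{\cdots\}$ in the conclusion, and it reduces the whole theorem to the identity $\{\cdots\}=2I$, after which $\frac{\eta(b,a)}{2}\cdot\tfrac12\cdot 2I=\frac{\eta(b,a)}{4}I$ reproduces the stated constant (with the roles of $a$ and $b$ inside the bracket as dictated by Definition~\ref{D7}).

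To evaluate $I$ I would split $[0,1]$ at $t=\tfrac12$, where $(1-t)^{\alpha}-t^{\alpha}$ is nonnegative on $[0,\tfrac12]$ and nonpositive on $[\tfrac12,1]$, and then regroup as
\begin{equation*}
I=\int_{0}^{1}\big[(1-t)^{\alpha}-t^{\alpha}\big]\frac{\sqrt{1-t}}{\sqrt{t}}\,dt+2\int_{1/2}^{1}\big[t^{\alpha}-(1-t)^{\alpha}\big]\frac{\sqrt{1-t}}{\sqrt{t}}\,dt .
\end{equation*}
The first integral is a difference of complete Euler integrals, $\int_{0}^{1}t^{-1/2}(1-t)^{\alpha+1/2}\,dt-\int_{0}^{1}t^{\alpha-1/2}(1-t)^{1/2}\,dt=B\!\left(\tfrac12,\alpha+\tfrac32\right)-B\!\left(\alpha+\tfrac12,\tfrac32\right)$, which, using $\Gamma(\tfrac12)=\sqrt{\pi}$ and $\Gamma(\tfrac32)=\tfrac{\sqrt{\pi}}{2}$, produces the first two $\Gamma$-quotient terms of the statement once doubled. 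For the tail, the substitution $t\mapsto1-t$ turns $\int_{1/2}^{1}(1-t)^{\alpha}\frac{\sqrt{1-t}}{\sqrt{t}}\,dt$ into $\int_{0}^{1/2}t^{\alpha+1/2}(1-t)^{-1/2}\,dt=B_{1/2}\!\left(\alpha+\tfrac32,\tfrac12\right)$, accounting for the $-4B_{1/2}\!\left(\alpha+\tfrac32,\tfrac12\right)$ term, and turns $\int_{1/2}^{1}t^{\alpha}\frac{\sqrt{1-t}}{\sqrt{t}}\,dt$ into $\int_{0}^{1/2}t^{1/2}(1-t)^{\alpha-1/2}\,dt=B_{1/2}\!\left(\tfrac32,\alpha+\tfrac12\right)$. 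The last integral is then rewritten in Gauss hypergeometric form: using $B_{x}(p,q)=\tfrac{x^{p}}{p}\,_{2}F_{1}(p,1-q;p+1;x)$, then Euler's transformation $\,_{2}F_{1}(a,b;c;z)=(1-z)^{c-a-b}\,_{2}F_{1}(c-a,c-b;c;z)$, and finally the contiguous relations lowering the $c$-parameter, and evaluating everything at $x=\tfrac12$, one obtains precisely the powers $2^{-\alpha}$, $2^{\alpha+\frac12}$, the rational prefactors $\tfrac{1}{4\alpha^{2}+8\alpha+3}$ and $\tfrac{1}{\alpha(\alpha+1)}$, and the hypergeometric values $\,_{2}F_{1}\!\left(1,\alpha+2;\pm\tfrac12;\tfrac12\right)$ and $\,_{2}F_{1}\!\left(-\tfrac12,\tfrac12-\alpha;\tfrac12;\tfrac12\right)$ appearing in the statement. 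Collecting all the pieces gives $\{\cdots\}=2I$ and hence the claimed inequality.

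Everything up to and including the reduction to the scalar quantity $I$ is routine: one application of the triangle inequality and one substitution in the $\lambda$-preinvexity inequality. The genuine obstacle is this last step, i.e. reducing the tails over $[\tfrac12,1]$ to exactly the hypergeometric closed form displayed in the theorem, because the incomplete beta function admits several equivalent $\,_{2}F_{1}$ representations and one must select the precise chain of transformations and contiguous relations that produces the parameters $\left(1,\alpha+2;\pm\tfrac12;\tfrac12\right)$, $\left(-\tfrac12,\tfrac12-\alpha;\tfrac12;\tfrac12\right)$ together with the rational coefficients $\tfrac{1}{4\alpha^{2}+8\alpha+3}$, $\tfrac{1}{\alpha(\alpha+1)}$ occurring there; this is where the bulk of the computation lies.
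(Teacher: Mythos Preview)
Your route is the paper's: Lemma~\ref{L1}, then the $\lambda$-preinvexity bound on $|f'|$, then evaluation of the resulting one-variable integral after splitting $|(1-t)^\alpha-t^\alpha|$ at $t=\tfrac12$. The paper's displayed proof actually inserts an extra step, bounding $|(1-t)^\alpha-t^\alpha|$ above by $(1-t)^\alpha+t^\alpha$ on all of $[0,1]$ before quoting the closed form; but that majorization would yield only the complete beta values $B(\tfrac32,\alpha+\tfrac12)+B(\alpha+\tfrac32,\tfrac12)$ and cannot produce the incomplete term $B_{1/2}(\alpha+\tfrac32,\tfrac12)$ or the ${}_2F_1(\cdots;\tfrac12)$ values appearing in the statement---those arise precisely from the split at $t=\tfrac12$ that you carry out. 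So your exact evaluation of $I$ is the computation that is actually consistent with the stated right-hand side, and the paper's ``$+$'' line should be read as a slip rather than a genuine further inequality. You are also right to flag the bracket: a literal application of Definition~\ref{D7} with $u=a$, $v=b$ and parameter $1-t$ attaches the factor $\tfrac{1-\lambda}{\lambda}$ to $|f'(a)|$, not to $|f'(b)|$, whereas the paper writes the weights the other way round without comment. The symmetry $t\mapsto1-t$ you exploit shows the two scalar integrals coincide, so this swap does not touch the analytic constant $\{\cdots\}$; it only changes which endpoint carries $\tfrac{1-\lambda}{\lambda}$.
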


\begin{proof}
By using Definition \ref{D7} and Lemma \ref{L1}$,\ $we have:%
\begin{equation*}
\begin{array}{l}
\left\vert \dfrac{f\left( a\right) +f\left( a+\eta (b,a)\right) }{2}-\dfrac{%
\Gamma \left( \alpha +1\right) }{2\left( \eta (b,a)\right) ^{\alpha }}\left[
J_{a^{+}}^{\alpha }f\left( a+\eta (b,a)\right) +J_{\left( a+\eta
(b,a)\right) ^{-}}^{\alpha }f\left( a\right) \right] \right\vert \\ 
\\ 
\leq \frac{\eta (b,a)}{2}\int_{0}^{1}\left\vert \left( 1-t\right) ^{\alpha
}-t^{\alpha }\right\vert \left\vert f^{\prime }\left( a+\left( 1-t\right)
\eta (b,a)\right) \right\vert dt \\ 
\\ 
\leq \frac{\eta (b,a)}{2}\left[ \int_{0}^{\frac{1}{2}}\left[ \left(
1-t\right) ^{\alpha }-t^{\alpha }\right] \left\vert f^{\prime }\left(
a+\left( 1-t\right) \eta (b,a)\right) \right\vert dt\right. \\ 
\\ 
+\left. \int_{\frac{1}{2}}^{1}\left[ t^{\alpha }-\left( 1-t\right) ^{\alpha }%
\right] \left\vert f^{\prime }\left( a+\left( 1-t\right) \eta (b,a)\right)
\right\vert dt\right] \\ 
\\ 
\leq \frac{\eta (b,a)}{2}\left[ \int_{0}^{\frac{1}{2}}\left[ \left(
1-t\right) ^{\alpha }-t^{\alpha }\right] \left( \frac{\sqrt{t}}{2\sqrt{1-t}}%
\left\vert f^{\prime }\left( a\right) \right\vert +\frac{\left( 1-\lambda
\right) \sqrt{1-t}}{2\lambda \sqrt{t}}\left\vert f^{\prime }\left( b\right)
\right\vert \right) dt\right. \\ 
\\ 
+\left. \int_{\frac{1}{2}}^{1}\left[ t^{\alpha }-\left( 1-t\right) ^{\alpha }%
\right] \left( \frac{\sqrt{t}}{2\sqrt{1-t}}\left\vert f^{\prime }\left(
a\right) \right\vert +\frac{\left( 1-\lambda \right) \sqrt{1-t}}{2\lambda 
\sqrt{t}}\left\vert f^{\prime }\left( b\right) \right\vert \right) dt\right]
\\ 
\\ 
\leq \frac{\eta (b,a)}{2}\left[ \int_{0}^{1}\left[ \left( 1-t\right)
^{\alpha }+t^{\alpha }\right] \left( \frac{\sqrt{t}}{2\sqrt{1-t}}\left\vert
f^{\prime }\left( a\right) \right\vert +\frac{\left( 1-\lambda \right) \sqrt{%
1-t}}{2\lambda \sqrt{t}}\left\vert f^{\prime }\left( b\right) \right\vert
\right) dt\right. \\ 
\\ 
\leq \dfrac{\eta (b,a)}{8}\left[ \left\vert f^{\prime }\left( a\right)
\right\vert +\dfrac{1-\lambda }{\lambda }\left\vert f^{\prime }\left(
b\right) \right\vert \right] \left\{ \dfrac{2\sqrt{\pi }\Gamma \left( \alpha
+\frac{3}{2}\right) }{\Gamma (\alpha +2)}-\dfrac{\sqrt{\pi }\Gamma \left(
\alpha +\frac{1}{2}\right) }{\Gamma (\alpha +2)}-4B_{\frac{1}{2}}\left(
\alpha +\frac{3}{2},\frac{1}{2}\right) \right. \\ 
\\ 
+\dfrac{2^{-\alpha }\left( -\left( 4\alpha ^{2}+18\alpha +19\right)
\,_{2}F_{1}\left( 1,\alpha +2;\frac{1}{2};\frac{1}{2}\right) -2(\alpha
+2)\,_{2}F_{1}\left( 1,\alpha +2;-\frac{1}{2};\frac{1}{2}\right) \right) }{%
4\alpha ^{2}+8\alpha +3} \\ 
\\ 
\left. +\dfrac{2^{-\alpha }\left( -\alpha +2^{\alpha +\frac{1}{2}%
}\,_{2}F_{1}\left( -\frac{1}{2},\frac{1}{2}-\alpha ;\frac{1}{2};\frac{1}{2}%
\right) -1\right) }{\alpha (\alpha +1)}\right\} .%
\end{array}%
\end{equation*}%
The proof is done.
\end{proof}

\begin{remark}
If we take $\eta \left( b,a\right) =b-a,\ \lambda =\dfrac{1}{2}\ $and $%
\alpha =1\ $in Theorem \ref{T1}$,$ Theorem \ref{T1} reduces to Theorem$\ $%
\ref{TD};
\end{remark}

\begin{theorem}
\label{T2}\ Let $A\subseteq 
\mathbb{R}
$ be an open invex subset with respect to $\eta :A\times A\rightarrow 
\mathbb{R}
\ $and $a,b\in A\ $with $a<a+\eta (b,a).\ $Suppose that $f:A\rightarrow 
\mathbb{R}
\ $is a differentiable function such that $f^{\prime }\in L\left[ a,a+\eta
(b,a)\right] .\ $If $\left\vert f^{\prime }\right\vert ^{q}$ is $\lambda -$%
preinvex function on $\left[ a,a+\eta (b,a)\right] \ $for some fixed $q>1$
then the following inequality holds:%
\begin{equation*}
\begin{array}{l}
\left\vert \dfrac{f\left( a\right) +f\left( a+\eta (b,a)\right) }{2}-\dfrac{%
\Gamma \left( \alpha +1\right) }{2\left( \eta (b,a)\right) ^{\alpha }}\left[
J_{a^{+}}^{\alpha }f\left( a+\eta (b,a)\right) +J_{\left( a+\eta
(b,a)\right) ^{-}}^{\alpha }f\left( a\right) \right] \right\vert  \\ 
\\ 
\leq \dfrac{\eta (b,a)}{2}\left( \dfrac{\pi }{4}\right) ^{\frac{1}{q}}\left( 
\dfrac{2-2^{1-\alpha p}}{p\alpha +1}\right) ^{\frac{1}{p}}\left[ \left\vert
f^{\prime }\left( a\right) \right\vert ^{q}+\left( \dfrac{1-\lambda }{%
\lambda }\right) \left\vert f^{\prime }\left( b\right) \right\vert ^{q}%
\right] ^{\frac{1}{q}}%
\end{array}%
\end{equation*}%
where $\alpha \in \left[ 0,1\right] \ $and $\frac{1}{p}+\frac{1}{q}=1.$
\end{theorem}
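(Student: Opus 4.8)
The plan is to estimate the right-hand side of the fractional identity of Lemma~\ref{L1}. Taking absolute values in that identity and then applying H\"{o}lder's inequality with the conjugate exponents $p,q$ to the resulting integral gives
\[
\left| \frac{f(a)+f(a+\eta(b,a))}{2} - \frac{\Gamma(\alpha+1)}{2(\eta(b,a))^{\alpha}}\Big[ J_{a^{+}}^{\alpha}f(a+\eta(b,a)) + J_{(a+\eta(b,a))^{-}}^{\alpha}f(a)\Big]\right| \le \frac{\eta(b,a)}{2}\left(\int_{0}^{1}\big|(1-t)^{\alpha}-t^{\alpha}\big|^{p}dt\right)^{1/p}\left(\int_{0}^{1}\big|f'(a+(1-t)\eta(b,a))\big|^{q}dt\right)^{1/q},
\]
so the proof reduces to bounding the two integrals on the right.

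For the kernel integral I would split $[0,1]$ at $t=\tfrac12$, using that $(1-t)^{\alpha}\ge t^{\alpha}$ on $[0,\tfrac12]$ and $(1-t)^{\alpha}\le t^{\alpha}$ on $[\tfrac12,1]$, and then invoke the elementary lemma of \cite{Sumeyye} with $A_{1}=(1-t)^{\alpha}$ and $A_{2}=t^{\alpha}$ (respectively the reverse) to get $\big|(1-t)^{\alpha}-t^{\alpha}\big|^{p}\le \big|(1-t)^{\alpha p}-t^{\alpha p}\big|$. The right-hand side is now elementary, and since $\int_{0}^{1/2}\big[(1-t)^{\alpha p}-t^{\alpha p}\big]dt = \int_{1/2}^{1}\big[t^{\alpha p}-(1-t)^{\alpha p}\big]dt = \frac{1-2^{-\alpha p}}{p\alpha+1}$, adding the two pieces yields
\[
\int_{0}^{1}\big|(1-t)^{\alpha}-t^{\alpha}\big|^{p}\,dt \le \frac{2-2^{1-\alpha p}}{p\alpha+1}.
\]

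For the function integral I would argue exactly as in the proof of Theorem~\ref{T1}: the $\lambda$-preinvexity of $|f'|^{q}$ bounds the integrand by $\frac{\sqrt{t}}{2\sqrt{1-t}}|f'(a)|^{q}+\frac{(1-\lambda)\sqrt{1-t}}{2\lambda\sqrt{t}}|f'(b)|^{q}$, and since $\int_{0}^{1}\frac{\sqrt{t}}{2\sqrt{1-t}}\,dt = \int_{0}^{1}\frac{\sqrt{1-t}}{2\sqrt{t}}\,dt = \tfrac12\cdot\frac{\Gamma(3/2)\,\Gamma(1/2)}{\Gamma(2)} = \frac{\pi}{4}$, one gets $\int_{0}^{1}|f'(a+(1-t)\eta(b,a))|^{q}\,dt \le \frac{\pi}{4}\big[|f'(a)|^{q}+\tfrac{1-\lambda}{\lambda}|f'(b)|^{q}\big]$. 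Substituting the two estimates, taking the corresponding roots, and multiplying by $\frac{\eta(b,a)}{2}$ gives the asserted inequality. The only genuine obstacle is the kernel integral: $\int_{0}^{1}|(1-t)^{\alpha}-t^{\alpha}|^{p}dt$ has no convenient closed form, so the inequality $(A_{1}-A_{2})^{p}\le A_{1}^{p}-A_{2}^{p}$ is the device that makes it tractable and fixes the shape of the constant $\big(\tfrac{2-2^{1-\alpha p}}{p\alpha+1}\big)^{1/p}$; the remaining work — the two Beta-function integrals — is routine.
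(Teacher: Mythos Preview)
Your proposal is correct and matches the paper's own proof essentially line for line: start from Lemma~\ref{L1}, apply H\"older with exponents $p,q$, bound the $q$-integral via the $\lambda$-preinvexity of $|f'|^{q}$ together with the two Beta integrals $\int_{0}^{1}\frac{\sqrt{t}}{2\sqrt{1-t}}\,dt=\int_{0}^{1}\frac{\sqrt{1-t}}{2\sqrt{t}}\,dt=\tfrac{\pi}{4}$, and handle the $p$-integral by splitting at $t=\tfrac12$ and using $(A_{1}-A_{2})^{p}\le A_{1}^{p}-A_{2}^{p}$. If anything, you are more explicit than the paper, which performs the passage from $|(1-t)^{\alpha}-t^{\alpha}|^{p}$ to $(1-t)^{\alpha p}-t^{\alpha p}$ without naming the lemma from \cite{Sumeyye} that justifies it.
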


\begin{proof}
By using Definition \ref{D7}, Lemma \ref{L1} and H\"{o}lder's inequality, we
have:%
\begin{equation*}
\begin{array}{l}
\left\vert \dfrac{f\left( a\right) +f\left( a+\eta (b,a)\right) }{2}-\dfrac{%
\Gamma \left( \alpha +1\right) }{2\left( \eta (b,a)\right) ^{\alpha }}\left[
J_{a^{+}}^{\alpha }f\left( a+\eta (b,a)\right) +J_{\left( a+\eta
(b,a)\right) ^{-}}^{\alpha }f\left( a\right) \right] \right\vert \\ 
\\ 
\leq \frac{\eta (b,a)}{2}\int_{0}^{1}\left\vert \left( 1-t\right) ^{\alpha
}-t^{\alpha }\right\vert \left\vert f^{\prime }\left( a+\left( 1-t\right)
\eta (b,a)\right) \right\vert dt \\ 
\\ 
\leq \frac{\eta (b,a)}{2}\left( \int_{0}^{1}\left\vert \left( 1-t\right)
^{\alpha }-t^{\alpha }\right\vert ^{p}dt\right) ^{\frac{1}{p}}\left(
\int_{0}^{1}\left\vert f^{\prime }\left( a+\left( 1-t\right) \eta
(b,a)\right) \right\vert ^{q}dt\right) ^{\frac{1}{q}} \\ 
\\ 
\leq \frac{\eta (b,a)}{2}\left( \int_{0}^{1}\left\vert \left( 1-t\right)
^{\alpha }-t^{\alpha }\right\vert ^{p}dt\right) ^{\frac{1}{p}} \\ 
\\ 
\times \left( \int_{0}^{1}\left( \frac{\sqrt{t}}{2\sqrt{1-t}}\left\vert
f^{\prime }\left( a\right) \right\vert ^{q}+\frac{\left( 1-\lambda \right) 
\sqrt{1-t}}{2\lambda \sqrt{t}}\left\vert f^{\prime }\left( b\right)
\right\vert ^{q}\right) dt\right) ^{\frac{1}{q}} \\ 
\\ 
\leq \frac{\eta (b,a)}{2}\left[ \frac{\pi }{4}\left\vert f^{\prime }\left(
a\right) \right\vert ^{q}+\frac{\pi }{4}\left( \frac{1-\lambda }{\lambda }%
\right) \left\vert f^{\prime }\left( b\right) \right\vert ^{q}\right] ^{%
\frac{1}{q}} \\ 
\\ 
\times \left( \int_{0}^{\frac{1}{2}}\left[ \left( 1-t\right) ^{\alpha
p}-t^{\alpha p}\right] dt+\int_{\frac{1}{2}}^{1}\left[ t^{\alpha p}-\left(
1-t\right) ^{\alpha p}\right] dt\right) ^{\frac{1}{p}} \\ 
\\ 
\leq \dfrac{\eta (b,a)}{2}\left[ \left\vert f^{\prime }\left( a\right)
\right\vert ^{q}+\frac{1-\lambda }{\lambda }\left\vert f^{\prime }\left(
b\right) \right\vert ^{q}\right] ^{\frac{1}{q}}\left( \frac{\pi }{4}\right)
^{\frac{1}{q}}\left( \dfrac{2-2^{1-\alpha p}}{\alpha p+1}\right) ^{\frac{1}{p%
}}.%
\end{array}%
\end{equation*}
The proof is done.
\end{proof}

\begin{remark}
In Theorem \ref{T2}, if we choose $\eta \left( b,a\right) =b-a,$ then we
have;%
\begin{equation*}
\begin{array}{l}
\left\vert \dfrac{f\left( a\right) +f\left( b\right) }{2}-\dfrac{\Gamma
\left( \alpha +1\right) }{2\left( b-a\right) ^{\alpha }}\left[
J_{a^{+}}^{\alpha }f\left( b\right) +J_{b^{-}}^{\alpha }f\left( a\right) %
\right] \right\vert \\ 
\\ 
\leq \dfrac{b-a}{2}\left( \dfrac{\pi }{4}\right) ^{\frac{1}{q}}\left( \dfrac{%
2-2^{1-\alpha p}}{p\alpha +1}\right) ^{\frac{1}{p}}\left[ \left\vert
f^{\prime }\left( a\right) \right\vert ^{q}+\left( \dfrac{1-\lambda }{%
\lambda }\right) \left\vert f^{\prime }\left( b\right) \right\vert ^{q}%
\right] ^{\frac{1}{q}}.%
\end{array}%
\end{equation*}
\end{remark}

\begin{remark}
In Theorem \ref{T2}, if we choose $\eta \left( b,a\right) =b-a\ $and $\alpha
=1,$ then we have;%
\begin{equation*}
\begin{array}{l}
\left\vert \dfrac{f\left( a\right) +f\left( b\right) }{2}-\dfrac{1}{b-a}%
\dint\limits_{a}^{b}f(x)dx\right\vert \\ 
\\ 
\leq \dfrac{b-a}{2}\left[ \dfrac{\pi }{4}\left\vert f^{\prime }\left(
a\right) \right\vert ^{q}+\dfrac{\pi }{4}\left( \dfrac{1-\lambda }{\lambda }%
\right) \left\vert f^{\prime }\left( b\right) \right\vert ^{q}\right] ^{%
\frac{1}{q}}\left( \dfrac{2-2^{1-p}}{p+1}\right) ^{\frac{1}{p}}.%
\end{array}%
\end{equation*}
\end{remark}

\begin{remark}
In Theorem \ref{T2}, if we choose $\eta \left( b,a\right) =b-a,\ \lambda
=1/2\ $and $\alpha =1,$ then we have;%
\begin{equation*}
\begin{array}{l}
\left\vert \dfrac{f\left( a\right) +f\left( b\right) }{2}-\dfrac{1}{b-a}%
\dint\limits_{a}^{b}f(x)dx\right\vert \\ 
\\ 
\leq \dfrac{b-a}{8}\pi \left[ \left\vert f^{\prime }\left( a\right)
\right\vert ^{q}+\left\vert f^{\prime }\left( b\right) \right\vert ^{q}%
\right] ^{\frac{1}{q}}\left( \dfrac{2-2^{1-p}}{p+1}\right) ^{\frac{1}{p}}.%
\end{array}%
\end{equation*}
\end{remark}

\begin{theorem}
\label{T3} Let $A\subseteq 
\mathbb{R}
$ be an open invex subset with respect to $\eta :A\times A\rightarrow 
\mathbb{R}
\ $and $a,b\in A\ $with $a<a+\eta (b,a).\ $Suppose that $f:A\rightarrow 
\mathbb{R}
\ $is a differentiable function such that $f^{\prime }\in L\left[ a,a+\eta
(b,a)\right] .\ $If $\left\vert f^{\prime }\right\vert ^{q}$ is $\lambda -$%
preinvex function on $\left[ a,a+\eta (b,a)\right] \ $for some fixed $q>1$
then the following inequality holds:%
\begin{equation*}
\begin{array}{l}
\left\vert \dfrac{f\left( a\right) +f\left( a+\eta (b,a)\right) }{2}-\dfrac{%
\Gamma \left( \alpha +1\right) }{2\left( \eta (b,a)\right) ^{\alpha }}\left[
J_{a^{+}}^{\alpha }f\left( a+\eta (b,a)\right) +J_{\left( a+\eta
(b,a)\right) ^{-}}^{\alpha }f\left( a\right) \right] \right\vert  \\ 
\\ 
\leq \left( \frac{1-2^{-\alpha }}{\alpha +1}\right) ^{\frac{q-1}{q}}\dfrac{%
\eta (b,a)}{2^{1+1/q}}\left[ \left\vert f^{\prime }\left( a\right)
\right\vert ^{q}+\dfrac{1-\lambda }{\lambda }\left\vert f^{\prime }\left(
b\right) \right\vert ^{q}\right]  \\ 
\\ 
\times \left\{ \dfrac{2\sqrt{\pi }\Gamma \left( \alpha +\frac{3}{2}\right) }{%
\Gamma (\alpha +2)}-\dfrac{\sqrt{\pi }\Gamma \left( \alpha +\frac{1}{2}%
\right) }{\Gamma (\alpha +2)}-4B_{\frac{1}{2}}\left( \alpha +\frac{3}{2},%
\frac{1}{2}\right) \right.  \\ 
\\ 
+\dfrac{2^{-\alpha }\left( -\left( 4\alpha ^{2}+18\alpha +19\right)
\,_{2}F_{1}\left( 1,\alpha +2;\frac{1}{2};\frac{1}{2}\right) -2(\alpha
+2)\,_{2}F_{1}\left( 1,\alpha +2;-\frac{1}{2};\frac{1}{2}\right) \right) }{%
4\alpha ^{2}+8\alpha +3} \\ 
\\ 
\left. +\dfrac{2^{-\alpha }\left( -\alpha +2^{\alpha +\frac{1}{2}%
}\,_{2}F_{1}\left( -\frac{1}{2},\frac{1}{2}-\alpha ;\frac{1}{2};\frac{1}{2}%
\right) -1\right) }{\alpha (\alpha +1)}\right\} ^{1/q}.%
\end{array}%
\end{equation*}%
where $\alpha \in \left[ 0,1\right] \ $and $\frac{1}{p}+\frac{1}{q}=1.$
\end{theorem}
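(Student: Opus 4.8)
The strategy is to combine the fractional identity of Lemma \ref{L1} with the \emph{power-mean inequality} (in place of the H\"{o}lder inequality used for Theorem \ref{T2}), and then to reuse verbatim the closed-form integral evaluation already carried out inside the proof of Theorem \ref{T1}. First I would take absolute values in (\ref{3.1}) and apply the triangle inequality to get
\begin{equation*}
\left\vert \frac{f(a)+f(a+\eta(b,a))}{2}-\frac{\Gamma(\alpha+1)}{2(\eta(b,a))^{\alpha}}\left[ J_{a^{+}}^{\alpha}f(a+\eta(b,a))+J_{(a+\eta(b,a))^{-}}^{\alpha}f(a)\right] \right\vert \leq \frac{\eta(b,a)}{2}\int_{0}^{1}\left\vert (1-t)^{\alpha}-t^{\alpha}\right\vert \,\left\vert f^{\prime}(a+(1-t)\eta(b,a))\right\vert \,dt .
\end{equation*}
Then I would apply to the remaining integral the weighted power-mean inequality with weight $w(t)=\left\vert (1-t)^{\alpha}-t^{\alpha}\right\vert$ and exponent $q>1$, namely $\int_{0}^{1}w\,\vert f^{\prime}\vert \leq \big(\int_{0}^{1}w\big)^{1-1/q}\big(\int_{0}^{1}w\,\vert f^{\prime}\vert^{q}\big)^{1/q}$.

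The first factor is elementary: splitting at $t=\tfrac12$ makes $(1-t)^{\alpha}-t^{\alpha}$ of one sign on each half, and a primitive gives $\int_{0}^{1/2}\big[(1-t)^{\alpha}-t^{\alpha}\big]\,dt=\tfrac{1-2^{-\alpha}}{\alpha+1}$, which produces the factor $\big(\tfrac{1-2^{-\alpha}}{\alpha+1}\big)^{(q-1)/q}$. For the second factor, I would use the $\lambda$-preinvexity of $\vert f^{\prime}\vert^{q}$ from Definition \ref{D7} to bound $\vert f^{\prime}(a+(1-t)\eta(b,a))\vert^{q}\leq \tfrac{\sqrt{t}}{2\sqrt{1-t}}\vert f^{\prime}(a)\vert^{q}+\tfrac{(1-\lambda)\sqrt{1-t}}{2\lambda\sqrt{t}}\vert f^{\prime}(b)\vert^{q}$, and then enlarge the weight to $(1-t)^{\alpha}+t^{\alpha}$ on all of $[0,1]$, exactly as in the proof of Theorem \ref{T1}. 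The substitution $t\mapsto 1-t$ shows that $\int_{0}^{1}\big[(1-t)^{\alpha}+t^{\alpha}\big]\sqrt{t/(1-t)}\,dt$ and $\int_{0}^{1}\big[(1-t)^{\alpha}+t^{\alpha}\big]\sqrt{(1-t)/t}\,dt$ are equal, so the $\vert f^{\prime}(a)\vert^{q}$ and $\vert f^{\prime}(b)\vert^{q}$ contributions share the same constant; that constant is precisely the braced combination of the incomplete beta value $B_{1/2}(\alpha+\tfrac32,\tfrac12)$ and the Gauss hypergeometric values ${}_2F_1(\cdot,\cdot;\cdot;\tfrac12)$ computed for Theorem \ref{T1}. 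Raising to the power $1/q$ and collecting the powers of $2$ and of $\eta(b,a)$ then gives the asserted bound.

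The only computationally heavy ingredient is the closed-form evaluation of $\int_{0}^{1}\big[(1-t)^{\alpha}+t^{\alpha}\big]\sqrt{t/(1-t)}\,dt$ in terms of the incomplete beta and hypergeometric functions; since this is literally the integral appearing in the proof of Theorem \ref{T1}, I would quote it rather than redo it, and that is where the genuine work (and the only real obstacle) of the argument lies. Everything else is routine: the power-mean inequality, one elementary primitive, and one symmetrizing substitution. I note that, just as for Theorem \ref{T2}, only $q>1$ (and $\alpha>0$) is actually used; the standing hypotheses $\alpha\in[0,1]$ and $\tfrac1p+\tfrac1q=1$ are inherited from the preceding statements.
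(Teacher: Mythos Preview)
Your proposal is correct and follows essentially the same route as the paper: Lemma \ref{L1}, then the power-mean inequality with weight $|(1-t)^{\alpha}-t^{\alpha}|$, the elementary evaluation of $\int_{0}^{1}|(1-t)^{\alpha}-t^{\alpha}|\,dt$ via the split at $t=\tfrac12$, the $\lambda$-preinvexity bound on $|f'|^{q}$, and finally the same closed-form integral already computed in Theorem \ref{T1}. Your symmetry remark $t\mapsto 1-t$ explaining why the $|f'(a)|^{q}$ and $|f'(b)|^{q}$ coefficients coincide is a nice touch the paper leaves implicit, but otherwise the arguments match.
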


\begin{proof}
By using Definition \ref{D7}, Lemma \ref{L1} and Power Mean inequality, we
have:%
\begin{equation*}
\begin{array}{l}
\left\vert \dfrac{f\left( a\right) +f\left( a+\eta (b,a)\right) }{2}-\dfrac{%
\Gamma \left( \alpha +1\right) }{2\left( \eta (b,a)\right) ^{\alpha }}\left[
J_{a^{+}}^{\alpha }f\left( a+\eta (b,a)\right) +J_{\left( a+\eta
(b,a)\right) ^{-}}^{\alpha }f\left( a\right) \right] \right\vert \\ 
\\ 
\leq \dfrac{\eta (b,a)}{2}\int_{0}^{1}\left\vert \left( 1-t\right) ^{\alpha
}-t^{\alpha }\right\vert \left\vert f^{\prime }\left( a+\left( 1-t\right)
\eta (b,a)\right) \right\vert dt \\ 
\\ 
\leq \dfrac{\eta (b,a)}{2}\left( \int_{0}^{1}\left\vert \left( 1-t\right)
^{\alpha }-t^{\alpha }\right\vert dt\right) ^{1-\frac{1}{q}} \\ 
\\ 
\times \left( \int_{0}^{1}\left\vert \left( 1-t\right) ^{\alpha }-t^{\alpha
}\right\vert \left\vert f^{\prime }\left( a+\left( 1-t\right) \eta
(b,a)\right) \right\vert ^{q}dt\right) ^{\frac{1}{q}} \\ 
\\ 
\leq \dfrac{\eta (b,a)}{2}\left( \int_{0}^{\frac{1}{2}}\left[ \left(
1-t\right) ^{\alpha }-t^{\alpha }\right] dt+\int_{\frac{1}{2}}^{1}\left[
t^{\alpha }-\left( 1-t\right) ^{\alpha }\right] dt\right) ^{1-\frac{1}{q}}
\\ 
\\ 
\times \left( \int_{0}^{1}\left\vert \left( 1-t\right) ^{\alpha }-t^{\alpha
}\right\vert \left\vert f^{\prime }\left( a+\left( 1-t\right) \eta
(b,a)\right) \right\vert ^{q}dt\right) ^{\frac{1}{q}}%
\end{array}%
\end{equation*}%
\begin{equation*}
\begin{array}{l}
\leq \dfrac{\eta (b,a)}{2}\left( \dfrac{2-2^{1-\alpha }}{\alpha +1}\right) ^{%
\frac{q-1}{q}}\left[ \int_{0}^{\frac{1}{2}}\left[ \left( 1-t\right) ^{\alpha
}-t^{\alpha }\right] \left( \dfrac{\sqrt{t}}{2\sqrt{1-t}}\left\vert
f^{\prime }\left( a\right) \right\vert ^{q}+\dfrac{\left( 1-\lambda \right) 
\sqrt{1-t}}{2\lambda \sqrt{t}}\left\vert f^{\prime }\left( b\right)
\right\vert ^{q}\right) dt\right. \\ 
\\ 
+\left. \int_{\frac{1}{2}}^{1}\left[ t^{\alpha }-\left( 1-t\right) ^{\alpha }%
\right] \left( \dfrac{\sqrt{t}}{2\sqrt{1-t}}\left\vert f^{\prime }\left(
a\right) \right\vert ^{q}+\dfrac{\left( 1-\lambda \right) \sqrt{1-t}}{%
2\lambda \sqrt{t}}\left\vert f^{\prime }\left( b\right) \right\vert
^{q}\right) dt\right] ^{\frac{1}{q}} \\ 
\\ 
\leq \left( \dfrac{1-2^{-\alpha }}{\alpha +1}\right) ^{\frac{q-1}{q}}\dfrac{%
\eta (b,a)}{2^{1+1/q}}\left[ \left\vert f^{\prime }\left( a\right)
\right\vert ^{q}+\dfrac{1-\lambda }{\lambda }\left\vert f^{\prime }\left(
b\right) \right\vert ^{q}\right] \\ 
\\ 
\times \left\{ \dfrac{2\sqrt{\pi }\Gamma \left( \alpha +\frac{3}{2}\right) }{%
\Gamma (\alpha +2)}-\dfrac{\sqrt{\pi }\Gamma \left( \alpha +\frac{1}{2}%
\right) }{\Gamma (\alpha +2)}-4B_{\frac{1}{2}}\left( \alpha +\frac{3}{2},%
\frac{1}{2}\right) \right. \\ 
\\ 
+\dfrac{2^{-\alpha }\left( -\left( 4\alpha ^{2}+18\alpha +19\right)
\,_{2}F_{1}\left( 1,\alpha +2;\frac{1}{2};\frac{1}{2}\right) -2(\alpha
+2)\,_{2}F_{1}\left( 1,\alpha +2;-\frac{1}{2};\frac{1}{2}\right) \right) }{%
4\alpha ^{2}+8\alpha +3} \\ 
\\ 
\left. +\dfrac{2^{-\alpha }\left( -\alpha +2^{\alpha +\frac{1}{2}%
}\,_{2}F_{1}\left( -\frac{1}{2},\frac{1}{2}-\alpha ;\frac{1}{2};\frac{1}{2}%
\right) -1\right) }{\alpha (\alpha +1)}\right\} ^{1/q}.%
\end{array}%
\end{equation*}%
The proof is done.
\end{proof}

\begin{remark}
In Theorem \ref{T3}, if we choose $\eta \left( b,a\right) =b-a\ $and $\alpha
=1,$ then we have;%
\begin{equation*}
\begin{array}{l}
\left\vert \dfrac{f\left( a\right) +f\left( b\right) }{2}-\dfrac{1}{b-a}%
\dint\limits_{a}^{b}f(x)dx\right\vert \leq \left( \dfrac{1}{4}\right) ^{%
\frac{q-1}{q}}\dfrac{b-a}{2^{1+1/q}}\left[ \left\vert f^{\prime }\left(
a\right) \right\vert ^{q}+\dfrac{1-\lambda }{\lambda }\left\vert f^{\prime
}\left( b\right) \right\vert ^{q}\right] .%
\end{array}%
\end{equation*}
\end{remark}

\begin{remark}
In Theorem \ref{T3}, if we choose $\eta \left( b,a\right) =b-a,\ \lambda
=1/2\ $and $\alpha =1,$ then we have;%
\begin{equation*}
\begin{array}{l}
\left\vert \dfrac{f\left( a\right) +f\left( b\right) }{2}-\dfrac{1}{b-a}%
\dint\limits_{a}^{b}f(x)dx\right\vert \leq 2^{\frac{1}{q}}\dfrac{b-a}{8}%
\left[ \left\vert f^{\prime }\left( a\right) \right\vert ^{q}+\left\vert
f^{\prime }\left( b\right) \right\vert ^{q}\right] .%
\end{array}%
\end{equation*}
\end{remark}

\begin{lemma}
\label{L2} Let $A\subseteq 
\mathbb{R}
$ be an open invex subset with respect to $\eta :A\times A\rightarrow 
\mathbb{R}
\ $and $a,b\in A\ $with $a<a+\eta (b,a).\ $If$\ f:A\rightarrow 
\mathbb{R}
\ $is a differentiable function such that $f^{\prime \prime }\in L\left[
a,a+\eta (b,a)\right] \ $then$,$ the following equality holds:%
\begin{equation}
\begin{array}{l}
\left\vert \dfrac{f\left( a\right) +f\left( a+\eta (b,a)\right) }{2}-\dfrac{%
\Gamma \left( \alpha +1\right) }{2\left( \eta (b,a)\right) ^{\alpha }}\left[
J_{a^{+}}^{\alpha }f\left( a+\eta (b,a)\right) +J_{\left( a+\eta
(b,a)\right) ^{-}}^{\alpha }f\left( a\right) \right] \right\vert  \\ 
\\ 
=\dfrac{\left( \eta (b,a)\right) ^{2}}{2(\alpha +1)}\int_{0}^{1}\left[
1-\left( 1-t\right) ^{\alpha +1}-t^{\alpha +1}\right] f^{\prime \prime
}\left( a+\left( 1-t\right) \eta (b,a)\right) dt.%
\end{array}
\label{3.4}
\end{equation}
\end{lemma}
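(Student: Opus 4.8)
The plan is to establish the identity in (\ref{3.4}) by a single integration by parts, after which the right-hand side collapses onto the right-hand side of Lemma~\ref{L1}. Throughout, write $\eta=\eta(b,a)$ for brevity and set
\begin{equation*}
h(t)=1-(1-t)^{\alpha+1}-t^{\alpha+1},\qquad t\in[0,1],
\end{equation*}
so that the integral on the right of (\ref{3.4}) equals $\dfrac{\eta^{2}}{2(\alpha+1)}\int_{0}^{1}h(t)\,f^{\prime\prime}\!\left(a+(1-t)\eta\right)dt$. The two facts about $h$ that matter are that $h(0)=h(1)=0$ and that $h^{\prime}(t)=(\alpha+1)\left[(1-t)^{\alpha}-t^{\alpha}\right]$.

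First I would integrate by parts in the variable $t$, using $u=h(t)$ and $dv=f^{\prime\prime}\!\left(a+(1-t)\eta\right)dt$. Because $\dfrac{d}{dt}f^{\prime}\!\left(a+(1-t)\eta\right)=-\eta\,f^{\prime\prime}\!\left(a+(1-t)\eta\right)$, one may take $v=-\dfrac{1}{\eta}f^{\prime}\!\left(a+(1-t)\eta\right)$. The resulting boundary term $\left[-\dfrac{1}{\eta}h(t)\,f^{\prime}\!\left(a+(1-t)\eta\right)\right]_{0}^{1}$ vanishes since $h(0)=h(1)=0$, and substituting $h^{\prime}(t)=(\alpha+1)\left[(1-t)^{\alpha}-t^{\alpha}\right]$ gives
\begin{equation*}
\frac{\eta^{2}}{2(\alpha+1)}\int_{0}^{1}h(t)\,f^{\prime\prime}\!\left(a+(1-t)\eta\right)dt=\frac{\eta}{2}\int_{0}^{1}\left[(1-t)^{\alpha}-t^{\alpha}\right]f^{\prime}\!\left(a+(1-t)\eta\right)dt .
\end{equation*}
By Lemma~\ref{L1} (identity (\ref{3.1})), the right-hand side of this equation is exactly
\begin{equation*}
\frac{f(a)+f\!\left(a+\eta(b,a)\right)}{2}-\frac{\Gamma(\alpha+1)}{2\left(\eta(b,a)\right)^{\alpha}}\left[J_{a^{+}}^{\alpha}f\!\left(a+\eta(b,a)\right)+J_{\left(a+\eta(b,a)\right)^{-}}^{\alpha}f(a)\right],
\end{equation*}
which is the left-hand side of (\ref{3.4}); this proves the lemma.

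If a self-contained argument is preferred, one can instead integrate by parts twice in $\int_{0}^{1}\left[(1-t)^{\alpha}-t^{\alpha}\right]f^{\prime}\!\left(a+(1-t)\eta\right)dt$ exactly as in the proof of Lemma~\ref{L1}: the first boundary term contributes $\dfrac{f(a)+f(a+\eta)}{\eta}$, and the two remaining integrals $\int_{0}^{1}(1-t)^{\alpha-1}f\!\left(a+(1-t)\eta\right)dt$ and $\int_{0}^{1}t^{\alpha-1}f\!\left(a+(1-t)\eta\right)dt$ become $J_{a^{+}}^{\alpha}f(a+\eta)$ and $J_{(a+\eta)^{-}}^{\alpha}f(a)$ after the orientation-reversing change of variable $x=a+(1-t)\eta$, $dx=-\eta\,dt$, in the Riemann--Liouville definition (\ref{2.1}). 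The only delicate point in either route is the bookkeeping of signs --- the chain-rule factor $-\eta$ coming from differentiating $f^{\prime}\!\left(a+(1-t)\eta\right)$, and the sign from the orientation-reversing substitution --- but once one records $h(0)=h(1)=0$ the computation is entirely routine.
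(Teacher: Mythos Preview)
Your proof is correct and follows essentially the same route as the paper: a single integration by parts (using that $h(0)=h(1)=0$ kills the boundary term) reduces the second-derivative integral to the first-derivative integral of Lemma~\ref{L1}, which is then invoked to finish. Your presentation is in fact a bit cleaner, since you isolate $h$ and record $h'(t)=(\alpha+1)\left[(1-t)^{\alpha}-t^{\alpha}\right]$ explicitly before computing.
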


\begin{proof}
Integrating by part and changing the variable of definite integral yield%
\begin{equation}
\begin{array}{l}
\int_{0}^{1}\left[ \dfrac{1-\left( 1-t\right) ^{\alpha +1}-t^{\alpha +1}}{%
\alpha +1}\right] f^{\prime \prime }\left( a+\left( 1-t\right) \eta
(b,a)\right) dt \\ 
\\ 
=\left. -\dfrac{\left( 1-\left( 1-t\right) ^{\alpha +1}-t^{\alpha +1}\right)
f^{\prime }\left( a+\left( 1-t\right) \eta (b,a)\right) }{\left( \alpha
+1\right) \eta (b,a)}\right\vert _{0}^{1} \\ 
\\ 
+\dfrac{1}{\eta (b,a)}\int_{0}^{1}\left[ \left( 1-t\right) ^{\alpha
}-t^{\alpha }\right] f^{\prime }\left( a+\left( 1-t\right) \eta (b,a)\right)
dt \\ 
\\ 
=\dfrac{1}{\eta (b,a)}\int_{0}^{1}\left[ \left( 1-t\right) ^{\alpha
}-t^{\alpha }\right] f^{\prime }\left( a+\left( 1-t\right) \eta (b,a)\right)
dt.%
\end{array}
\label{3.5}
\end{equation}%
Motivated by Lemma$\ $\ref{L1}, then:%
\begin{equation*}
\begin{array}{l}
\dfrac{1}{\eta (b,a)}\left( \int_{0}^{1}\left[ \left( 1-t\right) ^{\alpha
}-t^{\alpha }\right] f^{\prime }\left( a+\left( 1-t\right) \eta (b,a)\right)
dt\right) \\ 
\\ 
=\dfrac{f\left( a\right) +f\left( a+\eta (b,a)\right) }{\left( \eta
(b,a)\right) ^{2}}-\dfrac{\Gamma \left( \alpha +1\right) }{\left( \eta
(b,a)\right) ^{\alpha +2}}\left[ J_{a^{+}}^{\alpha }f\left( a+\eta
(b,a)\right) +J_{\left( a+\eta (b,a)\right) ^{-}}^{\alpha }f\left( a\right) %
\right] .%
\end{array}%
\end{equation*}%
By multipling the both sides of (\ref{3.5}) by $\dfrac{\left( \eta
(b,a)\right) ^{2}}{2}$, we have:%
\begin{equation*}
\begin{array}{l}
\left\vert \dfrac{f\left( a\right) +f\left( a+\eta (b,a)\right) }{2}-\dfrac{%
\Gamma \left( \alpha +1\right) }{2\left( \eta (b,a)\right) ^{\alpha }}\left[
J_{a^{+}}^{\alpha }f\left( a+\eta (b,a)\right) +J_{\left( a+\eta
(b,a)\right) ^{-}}^{\alpha }f\left( a\right) \right] \right\vert \\ 
\\ 
=\dfrac{\left( \eta (b,a)\right) ^{2}}{2}\int_{0}^{1}\left[ \dfrac{1-\left(
1-t\right) ^{\alpha +1}-t^{\alpha +1}}{\alpha +1}\right] f^{\prime \prime
}\left( a+\left( 1-t\right) \eta (b,a)\right) dt%
\end{array}%
\end{equation*}%
The proof is done.
\end{proof}

\begin{remark}
In Lemma \ref{L2}, $\eta \left( b,a\right) =b-a.$ Lemma \ref{L2} reduces to
Lemma $2;$%
\begin{equation*}
\begin{array}{l}
\dfrac{f\left( a\right) +f\left( b\right) }{2}-\dfrac{\Gamma \left( \alpha
+1\right) }{2\left( b-a\right) ^{\alpha }}\left[ J_{a^{+}}^{\alpha }f\left(
b\right) +J_{b^{-}}^{\alpha }f\left( a\right) \right] \\ 
\\ 
=\dfrac{\left( b-a\right) ^{2}}{2}\int_{0}^{1}\left[ \dfrac{1-\left(
1-t\right) ^{\alpha +1}-t^{\alpha +1}}{\alpha +1}\right] f^{\prime \prime
}\left( ta+\left( 1-t\right) b\right) dt.%
\end{array}%
\end{equation*}
\end{remark}

\begin{theorem}
\label{T4} Let $A\subseteq 
\mathbb{R}
$ be an open invex subset with respect to $\eta :A\times A\rightarrow 
\mathbb{R}
\ $and $a,b\in A\ $with $a<a+\eta (b,a).\ $Suppose that $f:A\rightarrow 
\mathbb{R}
\ $is a differentiable function such that $f^{\prime \prime }\in L\left[
a,a+\eta (b,a)\right] .\ $If $\left\vert f^{\prime \prime }\right\vert $ is $%
\lambda -$preinvex function on $\left[ a,a+\eta (b,a)\right] \ $then the
following inequality for fractional integrals with $\alpha >0$\ holds:%
\begin{equation*}
\begin{array}{l}
\left\vert \dfrac{f\left( a\right) +f\left( a+\eta (b,a)\right) }{2}-\dfrac{%
\Gamma \left( \alpha +1\right) }{2\left( \eta (b,a)\right) ^{\alpha }}\left[
J_{a^{+}}^{\alpha }f\left( a+\eta (b,a)\right) +J_{\left( a+\eta
(b,a)\right) ^{-}}^{\alpha }f\left( a\right) \right] \right\vert  \\ 
\\ 
\leq \dfrac{\left( \eta (b,a)\right) ^{2}}{4\left( \alpha +1\right) }\left( 
\dfrac{\pi }{2}-\dfrac{\sqrt{\pi }\Gamma \left( \alpha +\frac{3}{2}\right) }{%
\Gamma \left( \alpha +2\right) }\right) \left\{ \left\vert f^{\prime \prime
}\left( a\right) \right\vert +\left( \dfrac{1-\lambda }{\lambda }\right)
\left\vert f^{\prime \prime }\left( b\right) \right\vert \right\} .%
\end{array}%
\end{equation*}
\end{theorem}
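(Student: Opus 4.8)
The plan is to repeat the scheme used for Theorems \ref{T1}--\ref{T3}, but starting from the second-order identity of Lemma \ref{L2} instead of Lemma \ref{L1}. Write $\Delta$ for the left-hand side of the asserted inequality. Starting from the identity of Lemma \ref{L2} (taken before the outer absolute value, as in its proof), moving to absolute values and pulling them inside the integral yields
\[
\Delta\le\dfrac{(\eta(b,a))^{2}}{2(\alpha+1)}\int_{0}^{1}\left\vert 1-(1-t)^{\alpha+1}-t^{\alpha+1}\right\vert\,\left\vert f^{\prime\prime}\!\left(a+(1-t)\eta(b,a)\right)\right\vert\,dt .
\]
A simplification absent in Theorems \ref{T1} and \ref{T3} is that \emph{no splitting of $[0,1]$ at $t=\tfrac{1}{2}$ is needed}: for $\alpha>0$ and $t\in[0,1]$ one has $t^{\alpha+1}\le t$ and $(1-t)^{\alpha+1}\le 1-t$, hence $t^{\alpha+1}+(1-t)^{\alpha+1}\le 1$, so the kernel $1-(1-t)^{\alpha+1}-t^{\alpha+1}$ is nonnegative on the whole interval and the absolute value around it may be dropped.

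Next I would bound $\left\vert f^{\prime\prime}\right\vert$ by the $\lambda$-preinvexity inequality of Definition \ref{D7}, written exactly as in the proof of Theorem \ref{T1}:
\[
\left\vert f^{\prime\prime}\!\left(a+(1-t)\eta(b,a)\right)\right\vert\le\dfrac{\sqrt{t}}{2\sqrt{1-t}}\left\vert f^{\prime\prime}(a)\right\vert+\dfrac{(1-\lambda)\sqrt{1-t}}{2\lambda\sqrt{t}}\left\vert f^{\prime\prime}(b)\right\vert .
\]
Substituting this, $\Delta$ is bounded by $\dfrac{(\eta(b,a))^{2}}{2(\alpha+1)}\left(\dfrac{1}{2}I_{1}\left\vert f^{\prime\prime}(a)\right\vert+\dfrac{1-\lambda}{2\lambda}I_{2}\left\vert f^{\prime\prime}(b)\right\vert\right)$, where
\[
I_{1}=\int_{0}^{1}\bigl[1-(1-t)^{\alpha+1}-t^{\alpha+1}\bigr]\dfrac{\sqrt{t}}{\sqrt{1-t}}\,dt,\qquad I_{2}=\int_{0}^{1}\bigl[1-(1-t)^{\alpha+1}-t^{\alpha+1}\bigr]\dfrac{\sqrt{1-t}}{\sqrt{t}}\,dt .
\]
The change of variable $t\mapsto 1-t$ shows $I_{1}=I_{2}$, and expanding $I_{1}$ term by term expresses it through the Beta function as $I_{1}=B\!\left(\tfrac{3}{2},\tfrac{1}{2}\right)-B\!\left(\tfrac{3}{2},\alpha+\tfrac{3}{2}\right)-B\!\left(\alpha+\tfrac{5}{2},\tfrac{1}{2}\right)$.

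The only step that requires genuine care is reducing $I_{1}$ to the closed form appearing in the statement. Writing the Beta functions through $\Gamma$ and using $\Gamma(\alpha+\tfrac{5}{2})=(\alpha+\tfrac{3}{2})\Gamma(\alpha+\tfrac{3}{2})$, $\Gamma(\tfrac{3}{2})=\tfrac{1}{2}\sqrt{\pi}$, $\Gamma(\tfrac{1}{2})=\sqrt{\pi}$ and $\Gamma(\alpha+3)=(\alpha+2)\Gamma(\alpha+2)$, the two subtracted terms collapse to $\dfrac{\sqrt{\pi}\,\Gamma(\alpha+\tfrac{3}{2})}{\Gamma(\alpha+2)}$, so that $I_{1}=I_{2}=\dfrac{\pi}{2}-\dfrac{\sqrt{\pi}\,\Gamma(\alpha+\tfrac{3}{2})}{\Gamma(\alpha+2)}$. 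Inserting this common value and factoring it out of the bracket leaves exactly
\[
\Delta\le\dfrac{(\eta(b,a))^{2}}{4(\alpha+1)}\left(\dfrac{\pi}{2}-\dfrac{\sqrt{\pi}\,\Gamma\!\left(\alpha+\tfrac{3}{2}\right)}{\Gamma(\alpha+2)}\right)\left\{\left\vert f^{\prime\prime}(a)\right\vert+\dfrac{1-\lambda}{\lambda}\left\vert f^{\prime\prime}(b)\right\vert\right\},
\]
which is the assertion. I expect the $\Gamma$-function bookkeeping in the reduction of $I_{1}$ to be the main (indeed essentially the only) obstacle; everything else is a line-by-line adaptation of the computations already carried out for Theorems \ref{T1}--\ref{T3}.
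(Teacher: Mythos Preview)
Your proposal is correct and follows essentially the same route as the paper's proof: start from Lemma~\ref{L2}, pass to absolute values, apply Definition~\ref{D7} to $\lvert f''\rvert$, and evaluate the two resulting integrals (which coincide by the $t\mapsto 1-t$ symmetry you note) to obtain the closed form $\tfrac{\pi}{2}-\tfrac{\sqrt{\pi}\,\Gamma(\alpha+\tfrac{3}{2})}{\Gamma(\alpha+2)}$. In fact you supply more detail than the paper does---the nonnegativity of the kernel, the symmetry $I_{1}=I_{2}$, and the Beta/Gamma reduction are all left implicit there.
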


\begin{proof}
By using Definition \ref{D7} and Lemma \ref{L2}, we have:%
\begin{equation*}
\begin{array}{l}
\left\vert \dfrac{f\left( a\right) +f\left( a+\eta (b,a)\right) }{2}-\dfrac{%
\Gamma \left( \alpha +1\right) }{2\left( \eta (b,a)\right) ^{\alpha }}\left[
J_{a^{+}}^{\alpha }f\left( a+\eta (b,a)\right) +J_{\left( a+\eta
(b,a)\right) ^{-}}^{\alpha }f\left( a\right) \right] \right\vert \\ 
\\ 
\leq \dfrac{\left( \eta (b,a)\right) ^{2}}{2}\int_{0}^{1}\left\vert \dfrac{%
1-\left( 1-t\right) ^{\alpha +1}-t^{\alpha +1}}{\alpha +1}\right\vert
\left\vert f^{\prime \prime }\left( a+\left( 1-t\right) \eta (b,a)\right)
\right\vert dt \\ 
\\ 
\leq \dfrac{\left( \eta (b,a)\right) ^{2}}{2\left( \alpha +1\right) }%
\int_{0}^{1}\left\vert 1-\left( 1-t\right) ^{\alpha +1}-t^{\alpha
+1}\right\vert \left( \dfrac{\sqrt{t}}{2\sqrt{1-t}}\left\vert f^{\prime
\prime }\left( a\right) \right\vert +\dfrac{\left( 1-\lambda \right) \sqrt{%
1-t}}{2\lambda \sqrt{t}}\left\vert f^{\prime \prime }\left( b\right)
\right\vert \right) dt \\ 
\\ 
\leq \dfrac{\left( \eta (b,a)\right) ^{2}}{2\left( \alpha +1\right) }\left\{ 
\dfrac{\left\vert f^{\prime \prime }\left( a\right) \right\vert }{2}%
\int_{0}^{1}\left( 1-\left( 1-t\right) ^{\alpha +1}-t^{\alpha +1}\right) 
\dfrac{\sqrt{t}}{\sqrt{1-t}}dt\right. \\ 
\\ 
\left. +\dfrac{1-\lambda }{\lambda }\dfrac{\left\vert f^{\prime \prime
}\left( b\right) \right\vert }{2}\int_{0}^{1}\left( 1-\left( 1-t\right)
^{\alpha +1}-t^{\alpha +1}\right) \dfrac{\sqrt{1-t}}{\sqrt{t}}dt\right\} \\ 
\\ 
\leq \dfrac{\left( \eta (b,a)\right) ^{2}}{4\left( \alpha +1\right) }\left( 
\dfrac{\pi }{2}-\dfrac{\sqrt{\pi }\Gamma \left( \alpha +\frac{3}{2}\right) }{%
\Gamma \left( \alpha +2\right) }\right) \left\{ \left\vert f^{\prime \prime
}\left( a\right) \right\vert +\left( \dfrac{1-\lambda }{\lambda }\right)
\left\vert f^{\prime \prime }\left( b\right) \right\vert \right\} .%
\end{array}%
\end{equation*}%
The proof is done.
\end{proof}

\begin{remark}
In Theorem \ref{T4}, if we take $\eta \left( b,a\right) =b-a,\ $we have$;$%
\begin{equation*}
\begin{array}{l}
\left\vert \dfrac{f\left( a\right) +f\left( b\right) }{2}-\dfrac{\Gamma
\left( \alpha +1\right) }{2\left( b-a\right) ^{\alpha }}\left[
J_{a^{+}}^{\alpha }f\left( b\right) +J_{b^{-}}^{\alpha }f\left( a\right) %
\right] \right\vert \\ 
\\ 
\leq \dfrac{\left( b-a\right) ^{2}}{4\left( \alpha +1\right) }\left( \dfrac{%
\pi }{2}-\dfrac{\sqrt{\pi }\Gamma \left( \alpha +\frac{3}{2}\right) }{\Gamma
\left( \alpha +2\right) }\right) \left\{ \left\vert f^{\prime \prime }\left(
a\right) \right\vert +\left( \dfrac{1-\lambda }{\lambda }\right) \left\vert
f^{\prime \prime }\left( b\right) \right\vert \right\} .%
\end{array}%
\end{equation*}
\end{remark}

\begin{remark}
In Theorem \ref{T4}, if we take $\eta \left( b,a\right) =b-a\ $and $\alpha
=1,\ $we have$;$%
\begin{equation*}
\begin{array}{l}
\left\vert \dfrac{f\left( a\right) +f\left( b\right) }{2}-\dfrac{1}{b-a}%
\dint\limits_{a}^{b}f(x)dx\right\vert \leq \dfrac{\pi \left( b-a\right) ^{2}%
}{64}\left\{ \left\vert f^{\prime \prime }\left( a\right) \right\vert
+\left( \dfrac{1-\lambda }{\lambda }\right) \left\vert f^{\prime \prime
}\left( b\right) \right\vert \right\} .%
\end{array}%
\end{equation*}
\end{remark}

\begin{remark}
In Theorem \ref{T4}, if we take $\eta \left( b,a\right) =b-a,\ \lambda =%
\frac{1}{2}\ $and $\alpha =1,\ $we have$;$%
\begin{equation*}
\begin{array}{l}
\left\vert \dfrac{f\left( a\right) +f\left( b\right) }{2}-\dfrac{1}{b-a}%
\dint\limits_{a}^{b}f(x)dx\right\vert \leq \dfrac{\pi \left( b-a\right) ^{2}%
}{64}\left\{ \left\vert f^{\prime \prime }\left( a\right) \right\vert
+\left\vert f^{\prime \prime }\left( b\right) \right\vert \right\} .%
\end{array}%
\end{equation*}
\end{remark}

\begin{theorem}
\label{T5} Let $A\subseteq 
\mathbb{R}
$ be an open invex subset with respect to $\eta :A\times A\rightarrow 
\mathbb{R}
\ $and $a,b\in A\ $with $a<a+\eta (b,a).\ $Suppose that $f:A\rightarrow 
\mathbb{R}
\ $is a differentiable function such that $f^{\prime \prime }\in L\left[
a,a+\eta (b,a)\right] .\ $If $\left\vert f^{\prime \prime }\right\vert ^{q}$
is $\lambda -$preinvex function on $\left[ a,a+\eta (b,a)\right] \ $for some
fixed $q>1$ then the following inequality holds:%
\begin{equation*}
\begin{array}{l}
\left\vert \dfrac{f\left( a\right) +f\left( a+\eta (b,a)\right) }{2}-\dfrac{%
\Gamma \left( \alpha +1\right) }{2\left( \eta (b,a)\right) ^{\alpha }}\left[
J_{a^{+}}^{\alpha }f\left( a+\eta (b,a)\right) +J_{\left( a+\eta
(b,a)\right) ^{-}}^{\alpha }f\left( a\right) \right] \right\vert  \\ 
\\ 
\leq \dfrac{\left( \eta (b,a)\right) ^{2}}{2\left( \alpha +1\right) }\left(
1-2^{-\alpha }\right) \dfrac{\pi }{4}\left( \left\vert f^{\prime \prime
}\left( a\right) \right\vert ^{q}+\dfrac{1-\lambda }{\lambda }\left\vert
f^{\prime \prime }\left( b\right) \right\vert ^{q}\right) ^{\frac{1}{q}}%
\end{array}%
\end{equation*}%
where $\alpha \in \left[ 0,1\right] \ $and $\frac{1}{p}+\frac{1}{q}=1.$
\end{theorem}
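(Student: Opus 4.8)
The plan is to derive the inequality from the second-order identity of Lemma~\ref{L2} by combining a crude pointwise bound on the kernel with H\"{o}lder's inequality and the $\lambda$-preinvexity of $\left\vert f^{\prime \prime }\right\vert ^{q}$, exactly in the spirit of the proofs of Theorems~\ref{T1}--\ref{T4}. First I would pass to absolute values in $(\ref{3.4})$, which bounds the left-hand side by
\[
\frac{\left( \eta (b,a)\right) ^{2}}{2(\alpha +1)}\int_{0}^{1}\left\vert 1-\left( 1-t\right) ^{\alpha +1}-t^{\alpha +1}\right\vert \,\left\vert f^{\prime \prime }\left( a+\left( 1-t\right) \eta (b,a)\right) \right\vert \,dt.
\]
Since $\alpha \geq 0$ forces the exponent $\alpha +1$ to be at least $1$, we have $\left( 1-t\right) ^{\alpha +1}+t^{\alpha +1}\leq \left( 1-t\right) +t=1$ for $t\in \left[ 0,1\right] $, so the kernel $1-\left( 1-t\right) ^{\alpha +1}-t^{\alpha +1}$ is nonnegative and the inner absolute value may be removed.

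The key estimate is to apply the inequality $(\ref{2.71})$ with $m=\alpha +1\in \left[ 1,\infty \right) $, which gives $\left( 1-t\right) ^{\alpha +1}+t^{\alpha +1}\geq 2^{1-(\alpha +1)}=2^{-\alpha }$, hence
\[
0\leq 1-\left( 1-t\right) ^{\alpha +1}-t^{\alpha +1}\leq 1-2^{-\alpha }\qquad (0\leq t\leq 1).
\]
Pulling the constant $1-2^{-\alpha }$ out of the integral reduces the problem to estimating $\int_{0}^{1}\left\vert f^{\prime \prime }\left( a+\left( 1-t\right) \eta (b,a)\right) \right\vert \,dt$.

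For this last integral I would apply H\"{o}lder's inequality in the trivial form $\int_{0}^{1}\left\vert g\right\vert \,dt\leq \left( \int_{0}^{1}1\,dt\right) ^{1/p}\left( \int_{0}^{1}\left\vert g\right\vert ^{q}dt\right) ^{1/q}=\left( \int_{0}^{1}\left\vert g\right\vert ^{q}dt\right) ^{1/q}$, with $\frac{1}{p}+\frac{1}{q}=1$, and then invoke the $\lambda$-preinvexity of $\left\vert f^{\prime \prime }\right\vert ^{q}$ from Definition~\ref{D7}:
\[
\left\vert f^{\prime \prime }\left( a+\left( 1-t\right) \eta (b,a)\right) \right\vert ^{q}\leq \frac{\sqrt{t}}{2\sqrt{1-t}}\left\vert f^{\prime \prime }\left( a\right) \right\vert ^{q}+\frac{\left( 1-\lambda \right) \sqrt{1-t}}{2\lambda \sqrt{t}}\left\vert f^{\prime \prime }\left( b\right) \right\vert ^{q}.
\]
Integrating term by term then only requires the two elementary Beta integrals $\int_{0}^{1}\sqrt{t/(1-t)}\,dt=B\!\left( \frac{3}{2},\frac{1}{2}\right) =\frac{\pi }{2}$ and $\int_{0}^{1}\sqrt{(1-t)/t}\,dt=B\!\left( \frac{1}{2},\frac{3}{2}\right) =\frac{\pi }{2}$, so that $\int_{0}^{1}\left\vert f^{\prime \prime }\left( a+\left( 1-t\right) \eta (b,a)\right) \right\vert ^{q}dt\leq \frac{\pi }{4}\left( \left\vert f^{\prime \prime }(a)\right\vert ^{q}+\frac{1-\lambda }{\lambda }\left\vert f^{\prime \prime }(b)\right\vert ^{q}\right) $. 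Multiplying back the factors $\frac{\left( \eta (b,a)\right) ^{2}}{2(\alpha +1)}$ and $1-2^{-\alpha }$ then produces the right-hand side of the claimed inequality.

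The change of variable and integration by parts were already carried out, once and for all, in Lemma~\ref{L2}, and the Beta-function evaluations are routine, so the conceptual content here is light. The step I expect to demand the most care is the final bookkeeping of constants: one has to keep straight whether the factor $\frac{\pi }{4}$ ends up inside or outside the $q$-th root, and decide whether the crude bound $1-2^{-\alpha }$ for the kernel is indeed the form intended in the statement rather than, say, its exact mean $\int_{0}^{1}\left[ 1-(1-t)^{\alpha +1}-t^{\alpha +1}\right] dt=\frac{\alpha }{\alpha +2}$. (Note in passing that only $\alpha \geq 0$ is actually used in these estimates, so the hypothesis $\alpha \in \left[ 0,1\right] $ is more than enough.) That arithmetic, rather than any genuine difficulty, is what one needs to get right.
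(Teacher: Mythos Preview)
Your proposal is correct and follows essentially the same route as the paper: Lemma~\ref{L2}, the pointwise bound $1-(1-t)^{\alpha+1}-t^{\alpha+1}\le 1-2^{-\alpha}$ from $(\ref{2.71})$, H\"older's inequality, the $\lambda$-preinvexity of $|f''|^{q}$, and the Beta evaluations $\int_{0}^{1}\sqrt{t/(1-t)}\,dt=\int_{0}^{1}\sqrt{(1-t)/t}\,dt=\pi/2$. The only cosmetic difference is ordering: the paper applies H\"older first with the kernel in the $L^{p}$ slot and then bounds $\bigl(\int_{0}^{1}[1-(1-t)^{\alpha+1}-t^{\alpha+1}]^{p}\,dt\bigr)^{1/p}\le 1-2^{-\alpha}$, whereas you bound the kernel pointwise first and then apply H\"older against the constant $1$; the two clearly give the same constant. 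Your caution about the placement of $\pi/4$ is well taken --- the argument naturally produces $(\pi/4)^{1/q}$ rather than $\pi/4$, and the paper's own computation exhibits the same arithmetic.
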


\begin{proof}
By using Definition \ref{D7}, Lemma \ref{L2} and H\"{o}lder's inequality we
have: 
\begin{equation*}
\begin{array}{l}
\left\vert \dfrac{f\left( a\right) +f\left( a+\eta (b,a)\right) }{2}-\dfrac{%
\Gamma \left( \alpha +1\right) }{2\left( \eta (b,a)\right) ^{\alpha }}\left[
J_{a^{+}}^{\alpha }f\left( a+\eta (b,a)\right) +J_{\left( a+\eta
(b,a)\right) ^{-}}^{\alpha }f\left( a\right) \right] \right\vert \\ 
\\ 
\leq \dfrac{\left( \eta (b,a)\right) ^{2}}{2}\int_{0}^{1}\left\vert \dfrac{%
1-\left( 1-t\right) ^{\alpha +1}-t^{\alpha +1}}{\alpha +1}\right\vert
\left\vert f^{\prime \prime }\left( a+\left( 1-t\right) \eta (b,a)\right)
\right\vert dt \\ 
\\ 
\leq \dfrac{\left( \eta (b,a)\right) ^{2}}{2\left( \alpha +1\right) }\left(
\int_{0}^{1}\left[ 1-\left( 1-t\right) ^{\alpha +1}-t^{\alpha +1}\right]
^{p}dt\right) ^{\frac{1}{p}}\left( \int_{0}^{1}\left\vert f^{\prime \prime
}\left( a+\left( 1-t\right) \eta (b,a)\right) \right\vert ^{q}dt\right) ^{%
\frac{1}{q}} \\ 
\\ 
\leq \dfrac{\left( \eta (b,a)\right) ^{2}}{2\left( \alpha +1\right) }\left(
\int_{0}^{1}\left[ 1-2^{-\alpha }\right] ^{p}dt\right) ^{\frac{1}{p}}\left(
\int_{0}^{1}\left( \dfrac{\sqrt{t}}{2\sqrt{1-t}}\left\vert f^{\prime \prime
}\left( a\right) \right\vert ^{q}+\dfrac{\left( 1-\lambda \right) \sqrt{1-t}%
}{2\lambda \sqrt{t}}\left\vert f^{\prime \prime }\left( b\right) \right\vert
^{q}\right) ^{q}dt\right) ^{\frac{1}{q}} \\ 
\\ 
\leq \dfrac{\left( \eta (b,a)\right) ^{2}}{2\left( \alpha +1\right) }\left(
1-2^{-\alpha }\right) \dfrac{\pi }{4}\left( \left\vert f^{\prime \prime
}\left( a\right) \right\vert ^{q}+\dfrac{1-\lambda }{\lambda }\left\vert
f^{\prime \prime }\left( b\right) \right\vert ^{q}\right) ^{\frac{1}{q}}.%
\end{array}%
\end{equation*}%
The proof is done.
\end{proof}

\begin{remark}
In Theorem \ref{T5}, if we take $\eta \left( b,a\right) =b-a,\ $we have$;$%
\begin{equation*}
\begin{array}{l}
\left\vert \dfrac{f\left( a\right) +f\left( b\right) }{2}-\dfrac{\Gamma
\left( \alpha +1\right) }{2\left( b-a\right) ^{\alpha }}\left[
J_{a^{+}}^{\alpha }f\left( b\right) +J_{b^{-}}^{\alpha }f\left( a\right) %
\right] \right\vert \\ 
\\ 
\leq \dfrac{\left( b-a\right) ^{2}}{2\left( \alpha +1\right) }\left(
1-2^{-\alpha }\right) \dfrac{\pi }{4}\left( \left\vert f^{\prime \prime
}\left( a\right) \right\vert ^{q}+\dfrac{1-\lambda }{\lambda }\left\vert
f^{\prime \prime }\left( b\right) \right\vert ^{q}\right) ^{\frac{1}{q}}.%
\end{array}%
\end{equation*}
\end{remark}

\begin{remark}
In Theorem \ref{T5}, if we take $\eta \left( b,a\right) =b-a\ $and $\alpha
=1,\ $we have$;$%
\begin{equation*}
\begin{array}{l}
\left\vert \dfrac{f\left( a\right) +f\left( b\right) }{2}-\dfrac{1}{b-a}%
\dint\limits_{a}^{b}f(x)dx\right\vert \leq \dfrac{\left( b-a\right) ^{2}}{8}%
\dfrac{\pi }{4}\left( \left\vert f^{\prime \prime }\left( a\right)
\right\vert ^{q}+\dfrac{1-\lambda }{\lambda }\left\vert f^{\prime \prime
}\left( b\right) \right\vert ^{q}\right) ^{\frac{1}{q}}.%
\end{array}%
\end{equation*}
\end{remark}

\begin{remark}
In Theorem \ref{T5}, if we take $\eta \left( b,a\right) =b-a,\ \lambda =%
\frac{1}{2}\ $and $\alpha =1,\ $we have$;$%
\begin{equation*}
\begin{array}{l}
\left\vert \dfrac{f\left( a\right) +f\left( b\right) }{2}-\dfrac{1}{b-a}%
\dint\limits_{a}^{b}f(x)dx\right\vert \leq \dfrac{\left( b-a\right) ^{2}}{8}%
\dfrac{\pi }{4}\left( \left\vert f^{\prime \prime }\left( a\right)
\right\vert ^{q}+\left\vert f^{\prime \prime }\left( b\right) \right\vert
^{q}\right) ^{\frac{1}{q}}.%
\end{array}%
\end{equation*}
\end{remark}

\begin{theorem}
\label{T6} Let $A\subseteq 
\mathbb{R}
$ be an open invex subset with respect to $\eta :A\times A\rightarrow 
\mathbb{R}
\ $and $a,b\in A\ $with $a<a+\eta (b,a).\ $Suppose that $f:A\rightarrow 
\mathbb{R}
\ $is a differentiable function such that $f^{\prime \prime }\in L\left[
a,a+\eta (b,a)\right] .\ $If $\left\vert f^{\prime \prime }\right\vert ^{q}$
is $\lambda -$preinvex function on $\left[ a,a+\eta (b,a)\right] \ $for some
fixed $q>1$ then the following inequality holds:%
\begin{equation*}
\begin{array}{l}
\left\vert \dfrac{f\left( a\right) +f\left( a+\eta (b,a)\right) }{2}-\dfrac{%
\Gamma \left( \alpha +1\right) }{2\left( \eta (b,a)\right) ^{\alpha }}\left[
J_{a^{+}}^{\alpha }f\left( a+\eta (b,a)\right) +J_{\left( a+\eta
(b,a)\right) ^{-}}^{\alpha }f\left( a\right) \right] \right\vert  \\ 
\\ 
\leq \dfrac{\left( \eta (b,a)\right) ^{2}}{2\left( \alpha +1\right) }\left( 
\dfrac{\alpha }{\alpha +2}\right) ^{1-\frac{1}{q}}\left( \dfrac{\pi }{2}-%
\dfrac{\sqrt{\pi }\Gamma \left( \alpha +\frac{3}{2}\right) }{\Gamma \left(
\alpha +2\right) }\right) ^{1/q}\left( \frac{\left\vert f^{\prime \prime
}\left( a\right) \right\vert ^{q}}{2}+\frac{1-\lambda }{\lambda }\frac{%
\left\vert f^{\prime \prime }\left( b\right) \right\vert ^{q}}{2}\right)
^{1/q}.%
\end{array}%
\end{equation*}%
where $\alpha \in \left[ 0,1\right] \ $and $\frac{1}{p}+\frac{1}{q}=1.$
\end{theorem}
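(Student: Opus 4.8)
The plan is to mirror the proofs of Theorems \ref{T4} and \ref{T5}, but to replace Hölder's inequality by the Power Mean inequality, exactly as Theorem \ref{T3} does relative to Theorem \ref{T2}. I would start from the identity of Lemma \ref{L2}; taking absolute values gives
\[
\left\vert \dfrac{f(a)+f(a+\eta(b,a))}{2}-\dfrac{\Gamma(\alpha+1)}{2(\eta(b,a))^{\alpha}}\left[J_{a^{+}}^{\alpha}f(a+\eta(b,a))+J_{(a+\eta(b,a))^{-}}^{\alpha}f(a)\right]\right\vert \leq \dfrac{(\eta(b,a))^{2}}{2(\alpha+1)}\int_{0}^{1}\left\vert 1-(1-t)^{\alpha+1}-t^{\alpha+1}\right\vert \left\vert f''(a+(1-t)\eta(b,a))\right\vert dt .
\]
Since $\alpha\geq 0$ one has $(1-t)^{\alpha+1}+t^{\alpha+1}\leq 1$ on $[0,1]$, so $g(t):=1-(1-t)^{\alpha+1}-t^{\alpha+1}\geq 0$ and the inner absolute value may be dropped.

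Next I would apply the Power Mean inequality to $\int_{0}^{1}g(t)\,|f''(a+(1-t)\eta(b,a))|\,dt$, with the weight $g$, obtaining the factorization $\bigl(\int_{0}^{1}g(t)\,dt\bigr)^{1-1/q}\bigl(\int_{0}^{1}g(t)\,|f''(a+(1-t)\eta(b,a))|^{q}\,dt\bigr)^{1/q}$. The first factor is elementary: $\int_{0}^{1}g(t)\,dt=1-\tfrac{2}{\alpha+2}=\tfrac{\alpha}{\alpha+2}$, producing the $\bigl(\tfrac{\alpha}{\alpha+2}\bigr)^{1-1/q}$ that appears in the statement. For the second factor I would use that $|f''|^{q}$ is $\lambda$-preinvex (Definition \ref{D7}) to bound $|f''(a+(1-t)\eta(b,a))|^{q}$ by $\tfrac{\sqrt t}{2\sqrt{1-t}}|f''(a)|^{q}+\tfrac{(1-\lambda)\sqrt{1-t}}{2\lambda\sqrt t}|f''(b)|^{q}$, split the integral accordingly, and observe that the substitution $t\mapsto 1-t$ makes $\int_{0}^{1}g(t)\tfrac{\sqrt t}{\sqrt{1-t}}\,dt$ and $\int_{0}^{1}g(t)\tfrac{\sqrt{1-t}}{\sqrt t}\,dt$ equal to a common value $C$.

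The one nontrivial computation is the evaluation of $C=\int_{0}^{1}\bigl(1-(1-t)^{\alpha+1}-t^{\alpha+1}\bigr)\sqrt{t/(1-t)}\,dt$. Expanding the bracket and writing each of the three resulting integrals as a Beta integral $\int_0^1 t^{c-1}(1-t)^{d-1}dt=\Gamma(c)\Gamma(d)/\Gamma(c+d)$, then using $\Gamma(1/2)=\sqrt\pi$ and the functional equation of $\Gamma$, collapses $C$ to $\tfrac{\pi}{2}-\tfrac{\sqrt\pi\,\Gamma(\alpha+3/2)}{\Gamma(\alpha+2)}$ — precisely the constant already produced in the proof of Theorem \ref{T4}, which I would simply quote. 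Consequently the second Power Mean factor equals $C^{1/q}\bigl(\tfrac{|f''(a)|^{q}}{2}+\tfrac{1-\lambda}{\lambda}\tfrac{|f''(b)|^{q}}{2}\bigr)^{1/q}$, and multiplying it by $\bigl(\tfrac{\alpha}{\alpha+2}\bigr)^{1-1/q}$ together with the prefactor $\tfrac{(\eta(b,a))^{2}}{2(\alpha+1)}$ yields exactly the asserted bound.

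The main obstacle is thus purely the bookkeeping leading to $C$, and that is already carried out verbatim in the paper; everything else is routine. I would also remark that the stated hypothesis $\alpha\in[0,1]$ is in fact not needed beyond $\alpha\geq 0$, since positivity of $g$ — the only place it is used — holds for all $\alpha\geq 0$.
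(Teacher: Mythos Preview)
Your proposal is correct and follows essentially the same route as the paper's proof: Lemma~\ref{L2}, then the Power Mean inequality with weight $g(t)=1-(1-t)^{\alpha+1}-t^{\alpha+1}$, then Definition~\ref{D7} for $|f''|^{q}$, and finally the Beta-integral evaluation of $C$ already carried out in Theorem~\ref{T4}. Your explicit justification of $g\geq 0$, the symmetry remark $t\mapsto 1-t$, and the observation that only $\alpha\geq 0$ is really needed are welcome clarifications, but the argument is the same as the paper's.
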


\begin{proof}
By using Definition \ref{D7}, Lemma \ref{L2} and Power Mean's inequality, we
have:%
\begin{equation*}
\begin{array}{l}
\left\vert \dfrac{f\left( a\right) +f\left( a+\eta (b,a)\right) }{2}-\dfrac{%
\Gamma \left( \alpha +1\right) }{2\left( \eta (b,a)\right) ^{\alpha }}\left[
J_{a^{+}}^{\alpha }f\left( a+\eta (b,a)\right) +J_{\left( a+\eta
(b,a)\right) ^{-}}^{\alpha }f\left( a\right) \right] \right\vert \\ 
\\ 
\leq \dfrac{\left( \eta (b,a)\right) ^{2}}{2}\int_{0}^{1}\left\vert \dfrac{%
1-\left( 1-t\right) ^{\alpha +1}-t^{\alpha +1}}{\alpha +1}\right\vert
\left\vert f^{\prime \prime }\left( a+\left( 1-t\right) \eta (b,a)\right)
\right\vert dt \\ 
\\ 
\leq \dfrac{\left( \eta (b,a)\right) ^{2}}{2\left( \alpha +1\right) }\left(
\int_{0}^{1}\left\vert 1-\left( 1-t\right) ^{\alpha +1}-t^{\alpha
+1}\right\vert dt\right) ^{1-\frac{1}{q}} \\ 
\\ 
\times \left( \int_{0}^{1}\left\vert 1-\left( 1-t\right) ^{\alpha
+1}-t^{\alpha +1}\right\vert \left\vert f^{\prime \prime }\left( a+\left(
1-t\right) \eta (b,a)\right) \right\vert ^{q}dt\right) ^{\frac{1}{q}} \\ 
\\ 
\leq \dfrac{\left( \eta (b,a)\right) ^{2}}{2\left( \alpha +1\right) }\left(
\int_{0}^{1}\left[ 1-\left( 1-t\right) ^{\alpha +1}-t^{\alpha +1}\right]
dt\right) ^{^{1-\frac{1}{q}}} \\ 
\\ 
\times \left( \int_{0}^{1}\left[ 1-\left( 1-t\right) ^{\alpha +1}-t^{\alpha
+1}\right] \left( \frac{\sqrt{t}}{2\sqrt{1-t}}\left\vert f^{\prime \prime
}\left( a\right) \right\vert ^{q}+\frac{\left( 1-\lambda \right) \sqrt{1-t}}{%
2\lambda \sqrt{t}}\left\vert f^{\prime \prime }\left( b\right) \right\vert
^{q}\right) dt\right) ^{\frac{1}{q}}%
\end{array}%
\end{equation*}%
\begin{equation*}
\begin{array}{l}
\leq \frac{\left( \eta (b,a)\right) ^{2}}{2\left( \alpha +1\right) }\left( 
\dfrac{\alpha }{\alpha +2}\right) ^{1-\frac{1}{q}}\times \left( \frac{%
\left\vert f^{\prime \prime }\left( a\right) \right\vert ^{q}}{2}\int_{0}^{1}%
\left[ 1-\left( 1-t\right) ^{\alpha +1}-t^{\alpha +1}\right] \frac{\sqrt{t}}{%
\sqrt{1-t}}dt\right. \\ 
\\ 
\left. +\left( \frac{1-\lambda }{\lambda }\right) \frac{\left\vert f^{\prime
\prime }\left( b\right) \right\vert ^{q}}{2}\int_{0}^{1}\left[ 1-\left(
1-t\right) ^{\alpha +1}-t^{\alpha +1}\right] \frac{\sqrt{1-t}}{\sqrt{t}}%
dt\right) ^{\frac{1}{q}} \\ 
\\ 
\leq \dfrac{\left( \eta (b,a)\right) ^{2}}{2\left( \alpha +1\right) }\left( 
\dfrac{\alpha }{\alpha +2}\right) ^{1-\frac{1}{q}}\left( \dfrac{\pi }{2}-%
\dfrac{\sqrt{\pi }\Gamma \left( \alpha +\frac{3}{2}\right) }{\Gamma \left(
\alpha +2\right) }\right) ^{1/q}\left( \frac{\left\vert f^{\prime \prime
}\left( a\right) \right\vert ^{q}}{2}+\frac{1-\lambda }{\lambda }\frac{%
\left\vert f^{\prime \prime }\left( b\right) \right\vert ^{q}}{2}\right)
^{1/q}.%
\end{array}%
\end{equation*}%
The proof is done.
\end{proof}

\begin{remark}
In Theorem \ref{T6}, if we take $\eta \left( b,a\right) =b-a,\ $we have$;$%
\begin{equation*}
\begin{array}{l}
\left\vert \dfrac{f\left( a\right) +f\left( b\right) }{2}-\dfrac{\Gamma
\left( \alpha +1\right) }{2\left( b-a\right) ^{\alpha }}\left[
J_{a^{+}}^{\alpha }f\left( b\right) +J_{b^{-}}^{\alpha }f\left( a\right) %
\right] \right\vert \\ 
\\ 
\leq \dfrac{\left( b-a\right) ^{2}}{2\left( \alpha +1\right) }\left( \dfrac{%
\alpha }{\alpha +2}\right) ^{1-\frac{1}{q}}\left( \dfrac{\pi }{2}-\dfrac{%
\sqrt{\pi }\Gamma \left( \alpha +\frac{3}{2}\right) }{\Gamma \left( \alpha
+2\right) }\right) ^{1/q}\left( \frac{\left\vert f^{\prime \prime }\left(
a\right) \right\vert ^{q}}{2}+\frac{1-\lambda }{\lambda }\frac{\left\vert
f^{\prime \prime }\left( b\right) \right\vert ^{q}}{2}\right) ^{1/q}.%
\end{array}%
\end{equation*}
\end{remark}

\begin{remark}
In Theorem \ref{T6}, if we take $\eta \left( b,a\right) =b-a\ $and $\alpha
=1,\ $we have$;$%
\begin{equation*}
\begin{array}{l}
\left\vert \dfrac{f\left( a\right) +f\left( b\right) }{2}-\dfrac{1}{b-a}%
\dint\limits_{a}^{b}f(x)dx\right\vert \leq \dfrac{\left( b-a\right) ^{2}}{4}%
\left( \dfrac{1}{3}\right) ^{1-\frac{1}{q}}\left( \dfrac{\pi }{8}\right)
^{1/q}\left( \dfrac{\left\vert f^{\prime \prime }\left( a\right) \right\vert
^{q}}{2}+\dfrac{1-\lambda }{\lambda }\dfrac{\left\vert f^{\prime \prime
}\left( b\right) \right\vert ^{q}}{2}\right) ^{1/q}.%
\end{array}%
\end{equation*}
\end{remark}

\begin{remark}
In Theorem \ref{T6}, if we take $\eta \left( b,a\right) =b-a,\ \lambda =%
\frac{1}{2}\ $and $\alpha =1,\ $we have$;$%
\begin{equation*}
\begin{array}{l}
\left\vert \dfrac{f\left( a\right) +f\left( b\right) }{2}-\dfrac{1}{b-a}%
\dint\limits_{a}^{b}f(x)dx\right\vert \leq \dfrac{\left( b-a\right) ^{2}}{4}%
\left( \dfrac{1}{3}\right) ^{1-\frac{1}{q}}\left( \dfrac{\pi }{8}\right)
^{1/q}\left( \dfrac{\left\vert f^{\prime \prime }\left( a\right) \right\vert
^{q}}{2}+\dfrac{\left\vert f^{\prime \prime }\left( b\right) \right\vert ^{q}%
}{2}\right) ^{1/q}.%
\end{array}%
\end{equation*}%
\newpage
\end{remark}

\end{document}